\documentclass[11pt]{amsart}

\usepackage{amsmath,amssymb,mathtools}
\usepackage[colorlinks=true,citecolor=blue,linkcolor=blue,urlcolor=blue]{hyperref}

\newtheorem{theorem}{Theorem}[section]
\newtheorem{proposition}[theorem]{Proposition}
\newtheorem{lemma}[theorem]{Lemma}
\newtheorem{corollary}[theorem]{Corollary}
\theoremstyle{definition}
\newtheorem{definition}[theorem]{Definition}
\theoremstyle{remark}
\newtheorem{remark}[theorem]{Remark}
\newtheorem{question}[theorem]{Question}

\newcommand{\C}{\mathbb C}
\newcommand{\D}{\mathbb D}

\newcommand{\E}{\mathbb E}
\DeclareMathOperator{\Var}{Var}
\DeclarePairedDelimiter{\abs}{\lvert}{\rvert}
\DeclarePairedDelimiter{\norm}{\lVert}{\rVert}

\title[The Korenblum radius of the Fock space]
{The sharp radius in Korenblum's maximum principle for the Fock space}

\author{Frank Wikström}
\address{Centre for Mathematical Sciences, Lund University,
Box 118, SE-221 00 Lund, Sweden}
\email{frank.wikstrom@math.lth.se}

\subjclass[2020]{Primary 30H20; Secondary 30C80, 44A60}
\keywords{Fock space, Korenblum's maximum principle, domination,
Schwarz--Pick lemma, moment problem}

\begin{document}

\begin{abstract}
Korenblum's maximum principle states that if $\abs{f}\le\abs{g}$ near the
boundary, then $\norm{f}\le\norm{g}$.  The optimal size of the region of
domination has been studied in Bergman spaces since 1991 and in Fock spaces
since 2006, but only bounds have been obtained.  We determine the optimal size
for the Fock space $F^2$ of entire functions that are square integrable with
respect to $e^{-\abs{z}^2}\,dA(z)$: if $f, g\in F^2$ and
$\abs{f(z)}\le\abs{g(z)}$ for all $\abs{z}>1$, then $\norm{f}\le\norm{g}$.  The
radius $1$ is optimal and we determine all pairs for which equality holds.  To
our knowledge, this is the first space of Bergman or Fock type in which the
optimal radius in Korenblum's principle has been determined.

The proof combines a Schwarz--Pick estimate centred at infinity with a
moment-duality argument, and requires no numerical computation.  The same method
gives a moment criterion for weighted Fock-type spaces.  For every
non-increasing radial weight, the optimal radius equals the upper bound given by
the pairs $f\equiv c$, $g(z)=z$.  In particular, the upper bounds of Wee and Le
for such weighted Fock spaces are sharp in the Hilbert space case. We also show
that the optimal radius is $\sqrt{(\beta+1)/\alpha}$ for the weight
$\abs{z}^{2\beta}e^{-\alpha\abs{z}^2}$, whenever $-1<\beta\le5.44$.
\end{abstract}

\maketitle

\section{Introduction}

Let $\D$ be the unit disk and let $dA$ denote area measure on $\C$. Korenblum's
maximum principle \cite{Korenblum1991} asserts that there is an absolute
constant $c \in (0,1)$ such that, whenever $f$ and $g$ belong to the Bergman
space $A^2(\D)$ and $\abs{f(z)} \le \abs{g(z)}$ for $c < \abs{z} < 1$, it
follows that $\norm{f}_{A^2} \le \norm{g}_{A^2}$.  This was proved by
Hayman~\cite{Hayman1999}, and extended to $A^p(\D)$, $p\ge1$, by
Hinkkanen~\cite{Hinkkanen1999}. For $0<p<1$, it
fails~\cite{BozinKarapetrovic2018}.

Once the principle is known to hold, the natural problem is to find the optimal
constant, and this has proved difficult.  For $A^2(\D)$ the optimal constant
$c_2$ is still unknown after more than thirty years, and the best current bounds
are $0.4263 \le c_2 < 0.6779$~\cite{Wang2008,Wikstrom2026}.  The situation is
the same in the other spaces where the principle has been studied, among them
$A^p(\D)$, weighted Bergman spaces, mixed norm spaces~\cite{Karapetrovic2022},
and the Fock spaces with their weighted variants.  In none of them has the
optimal constant been determined; only lower or upper bounds are available, and
often only one of the two, see the survey~\cite{WeeLe2023survey}.  For instance,
Efraimidis, Llinares and Vukoti\'c~\cite{EfraimidisLlinaresVukotic2025} proved
the principle in $A^p_w(\D)$, $1\le p<\infty$, for every integrable radial
weight~$w$, and showed that the optimal radius is always strictly less than~$1$.
(In the Hardy spaces the question degenerates, since domination on any
non-tangentially dense set already implies the norm
inequality~\cite[Theorem~4.1]{WeeLe2023survey}.)

In the Fock space, the role of the annulus $\{ c < \abs{z} < 1\}$ is played by
the exterior of a disk.  Let
\[
    \norm{f}^2 = \frac{1}{\pi} \int_{\C} \abs{f(z)}^2 e^{-\abs{z}^2}\,dA(z),
\]
and define the \emph{Fock space} $F^2=\{f \text{ entire}: \norm{f}<\infty\}$. We
say that $R>0$ is \emph{admissible} if $\norm{f} \le \norm{g}$ whenever $f, g
\in F^2$ satisfy $\abs{f(z)} \le \abs{g(z)}$ for all $\abs{z} > R$, and we call
the supremum $\mathcal{K}_{F^2}$ of the admissible radii the \emph{Korenblum radius}
of $F^2$.  Schuster~\cite[Theorem~2]{Schuster2006} proved that $R=0.54$ is
admissible, and Wang~\cite[Theorem~2]{Wang2006} improved this to $R=0.7248$.
Zhu~\cite[Theorem~2.43]{ZhuFock} established the principle in the spaces $F^p_\alpha$ for $p
\ge1 $, while Hu and Lou~\cite{HuLou2019} showed that it fails for $ 0 < p < 1$.
On the other side, Wee and Le~\cite{WeeLe2020} observed that the pair $f\equiv
c$, $g(z)=z$ gives $\mathcal{K}_{F^2} \le 1$, and asked whether the radius~$1$ is
admissible \cite[Question~4.1]{WeeLe2020}.  The survey~\cite{WeeLe2023survey}
records the state of the art as $0.7248 \le \mathcal{K}_{F^2} \le 1$.

Our main result answers this question.  To our knowledge, it is the first
determination of the optimal radius in Korenblum's principle for a space of
Bergman or Fock type.  Moreover, the optimal radius is
attained, and we describe all extremal pairs.

\begin{theorem}\label{thm:main} Let $f, g\in F^2$ and suppose that $\abs{f(z)}
\le \abs{g(z)}$ for all $\abs{z} > 1$.  Then $\norm{f} \le \norm{g}$.  Moreover,
$\norm{f} = \norm{g}$ if and only if either $f = \eta g$ for a unimodular
constant~$\eta$, or
\begin{equation}\label{eq:extremal}
    f(z)=c\,(\eta+az),\qquad g(z)=c\,(z+\bar a\eta)
\end{equation}
for some $c\in\C$, $\abs{\eta}=1$ and $\abs{a}<1$.
\end{theorem}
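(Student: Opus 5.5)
We follow the route suggested in the abstract: a Schwarz--Pick estimate centred at infinity, followed by a moment-duality argument. The first step is to reduce to a quotient. If $g\equiv0$ then $f\equiv0$ and there is nothing to prove. Otherwise $\abs{f}\le\abs{g}$ on $\abs{z}>1$ shows that $f/g$ is analytic on $\{\abs{z}>1\}$, because zeros of $g$ there are zeros of $f$ of at least the same order. Hence $\varphi(w)=f(1/w)/g(1/w)$ extends to a holomorphic self-map of $\D$: it is bounded by $1$ on the punctured disc, so the singularity at $0$ is removable. If $\varphi$ is a unimodular constant we are in the case $f=\eta g$. Otherwise $\abs{\varphi}<1$ on $\D$.

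The Schwarz--Pick step compares $\abs{f}$ and $\abs{g}$ inside the disc. Write $\varphi(0)=b$ and set $\psi=(\varphi-b)/(1-\bar b\varphi)$, which satisfies $\psi(0)=0$. Schwarz--Pick gives, for $\abs{w}<1$, the pointwise bound
\[
\abs{\varphi(w)}\le\frac{\abs{w}+\abs{b}}{1+\abs{b}\abs{w}}.
\]
This controls $\abs{f}$ by $\abs{g}$ on $\abs{z}>1$, but we need information on all of $\C$. The plan is therefore to work with the whole exterior rather than with pointwise bounds. Expand $\norm{g}^2-\norm{f}^2=\frac1\pi\int(\abs{g}^2-\abs{f}^2)e^{-\abs{z}^2}dA$ in polar coordinates as a sum of Taylor-coefficient contributions weighted by the radial moments $\int_0^\infty r^{2n+1}e^{-r^2}dr=n!/2$.

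The core is a moment-duality step. Averaging over circles, $M_h(r)=\frac1{2\pi}\int_0^{2\pi}(\abs{g}^2-\abs{f}^2)(re^{i\theta})\,d\theta$ equals $\sum_n (\abs{g_n}^2-\abs{f_n}^2)r^{2n}$, a power series in $r^2$ that is nonnegative for $r>1$. We must show that $\int_0^\infty M_h(r)e^{-r^2}\,2r\,dr\ge0$. The idea is to find, for the Gaussian weight, a representation
\[
e^{-t}\,dt\Big|_{[0,\infty)}=\mu_{\mathrm{ext}}+\text{(correction)},
\]
where $t=r^2$ and $\mu_{\mathrm{ext}}$ is a positive measure supported on $t\ge1$ whose moments $\int t^n\,d\mu$ equal $n!$ for all $n$ except possibly $n=0$ and $n=1$. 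This is exactly why $f\equiv c$, $g=z$ is extremal. The discrepancy in the first two moments then has to be absorbed using the Schwarz--Pick information. Concretely, I would use the structure $f=\varphi(1/z)g$ to show that the coefficients satisfy $\sum_{n\ge k}(\abs{g_n}^2-\abs{f_n}^2)\ge0$-type tail inequalities, derived from the subordination $f\prec g$ at infinity, or from Littlewood subordination applied to $\psi$. Combined with the fact that $n!$ is log-convex and moments $\int_1^\infty t^n e^{-t}dt$ dominate appropriately, an Abel summation should then give $\sum_n n!(\abs{g_n}^2-\abs{f_n}^2)\ge0$.

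The main obstacle is the duality step: producing the right positive weights on $[1,\infty)$, or equivalently certifying that the sequence $n!$ lies in the appropriate cone generated by $\{t^n\}_{t\ge1}$ modulo the low-order terms that Schwarz--Pick controls. I would first try the explicit choice $d\mu=e^{-t}\,dt$ restricted to $t\ge1$, with the mass on $[0,1]$ transported to the point $t=1$. That transport preserves positivity of $M_h$ and changes only the terms for which $t^n$ is below $1$, and these must be compensated by the $n=0$ Schwarz--Pick bound $\abs{f_0}\le\ldots$. For the equality case, I would trace where the inequalities are tight. Equality should force $\psi$ to be a rotation, so that $\varphi$ is a disc automorphism of degree one, and should force $g$ to have degree at most one. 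Solving these conditions gives \eqref{eq:extremal}, and a direct computation confirms that $\norm{c(\eta+az)}=\norm{c(z+\bar a\eta)}$, since both have squared norm $\abs{c}^2(1+\abs{a}^2)$.
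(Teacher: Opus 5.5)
Your proposal is not yet a proof. Its main plan cannot work as stated, and the one route that does work is only asserted.

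\textbf{The moment framework fails.} The decomposition you aim for cannot exist. If $\mu_{\mathrm{ext}}\ge0$ lived on $[1,\infty)$ with $\int t^n\,d\mu_{\mathrm{ext}}=n!$ for all $n\ge2$, then $t^2\,d\mu_{\mathrm{ext}}$ and $t^2e^{-t}\,dt$ would share the moments $(k+2)!$. That Stieltjes problem is determinate (Carleman's condition holds), so $\mu_{\mathrm{ext}}=e^{-t}\,dt$, which charges $(0,1)$.

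Your explicit choice, the law of $\max(E,1)$ with $E$ standard exponential, does satisfy \eqref{eq:moment-conditions}, but not in the way you use it. Moving the mass of $[0,1)$ to $t=1$ raises every moment with $n\ge1$, by $\int_0^1(1-t^n)e^{-t}\,dt$, and leaves $n=0$ unchanged. So a bound on $\abs{f_0}$ cannot absorb the discrepancy. Positivity of $M_h$ on $[1,\infty)$ is also far too weak by itself: $P(t)=(t-1)(t-3)^2=t^3-7t^2+15t-9$ is non-negative on $[1,\infty)$, yet $\int_0^\infty P(t)e^{-t}\,dt=-2$. The modulus form of Schwarz--Pick you write down does not supply the missing information.

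The paper instead extracts a coefficient-split estimate. With $x_k,y_k$ the positive and negative parts of $\abs{a_k}^2-\abs{b_k}^2$, Lemma~\ref{lem:circle} gives $X(t)\le Y(t)/t$ for $t\ge1$. This uses the pseudo-hyperbolic bound $\delta(\omega(z),a)\le1/\abs{z}$ centred at $a=\omega(\infty)$, and Lemma~\ref{lem:identity} applied to $g-\bar af$. The estimate is then integrated against $\nu$ using both $\int t^k\,d\nu\ge k!$ and the damped condition $\int t^{k-1}\,d\nu\le k!$. Your plan has neither the split estimate nor this second family of conditions.

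\textbf{The tail-inequality route.} Tail inequalities plus Abel summation do lead to a proof, and this would be a genuinely different and shorter route than the paper's. But the tail inequality is the whole content, you only assert it, and you cite the wrong tool. The relation $f=\varphi(1/z)g$ is multiplicative, not a subordination, so Littlewood's theorem says nothing here. What is needed is the following.

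Comparing Laurent coefficients in $\abs{z}>1$ gives $f_n=\sum_{j\ge0}\phi_jg_{n+j}$, where $\varphi(w)=\sum_j\phi_jw^j$. For finitely supported $(c_n)_{n\ge k}$, put $C(\zeta)=\sum_n c_n\zeta^n$ and $\Phi(\zeta)=\overline{\varphi(\bar\zeta)}$. Then $\sum_{n\ge k}f_n\bar c_n=\sum_{m\ge k}g_m\overline{(\Phi C)_m}$, where $(\Phi C)_m$ is the $m$-th Taylor coefficient. Cauchy--Schwarz and $\norm{\Phi C}_{H^2}\le\norm{C}_{H^2}$ give $D_k=\sum_{n\ge k}(\abs{g_n}^2-\abs{f_n}^2)\ge0$ for every $k\ge0$.

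Abel summation then yields $\norm{g}^2-\norm{f}^2=D_0+\sum_{n\ge2}(n-1)\,(n-1)!\,D_n\ge0$. The boundary term $D_{N+1}N!$ tends to $0$ because $f,g\in F^2$. Only the monotonicity of $n!$ enters; log-convexity and the comparison with $\int_1^\infty t^ne^{-t}\,dt$ play no role.

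Applied to $f(Rz)$ and $g(Rz)$ for a general radial weight, the argument only needs $m_n\ge R^2m_{n-1}$ for $n\ge1$. Log-convexity of moments gives this at $R^2=m_1/m_0$ for every radial weight, so this route would yield more than Theorem~\ref{thm:criterion}.

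\textbf{The equality case is only asserted.} Along this route, equality forces $D_0=0$, hence $\abs{f}=\abs{g}$ on $\abs{z}=1$. Then $\omega(1/u)$ is a unimodular constant or a finite Blaschke product (see the discussion after Proposition~\ref{prop:blaschke}). You still need an argument to force degree one and $h$ constant. Either Proposition~\ref{prop:blaschke} does this, or the paper's factorization $f=(\eta+az)h$, $g=(z+\bar a\eta)h$ together with $\norm{zh}^2-\norm{h}^2=\sum_k k\cdot k!\,\abs{h_k}^2$.
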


Since no radius $R>1$ is admissible (Lemma~\ref{lem:trivial-bound}), the
Korenblum radius of $F^2$ is exactly~$1$, and it is attained.  By scaling,
the Korenblum radius of the space $F^2_\alpha$ with the measure
$\frac{\alpha}{\pi}e^{-\alpha\abs{z}^2}\,dA$ is $1/\sqrt\alpha$
(Corollary~\ref{cor:alpha}).

The extremal pairs~\eqref{eq:extremal} form a family of pairs of linear
polynomials.  It contains the pair $f\equiv1$, $g(z)=z$ of Wee and Le
($a=0$).  For every member, $\abs{f}=\abs{g}$ on the whole unit circle, so the
domination is tight on the entire boundary of $\{\abs{z}>1\}$.  For the Bergman
space no such description is available.

The proof has two ingredients.  The first is a pointwise estimate on circles: if
$\abs{f} \le \abs{g}$ for $\abs{z} > R$, then, apart from the trivial cases
$g\equiv0$ and $f/g$ a unimodular constant, $\omega = f/g$ is a holomorphic map
from the disk $\{ \abs{z} > R \} \cup \{ \infty \}$ of the Riemann sphere into
$\D$, and the Schwarz--Pick lemma, centred at $\omega(\infty)$, controls the
positive and negative parts of the sequence $\abs{a_k}^2 - \abs{b_k}^2$ of
differences of squared Taylor coefficients on every circle $\abs{z} = r > R$
(Lemma~\ref{lem:circle}).  The second is the moment-duality principle introduced
by the author in~\cite{Wikstrom2026} for the Bergman space: integrating the
circle estimates against a suitable measure turns them into the norm inequality.
The proof below is self-contained and does not depend on~\cite{Wikstrom2026}.

The first ingredient is what makes the Fock space tractable.  The exterior of a
disk contains the point~$\infty$, and the Schwarz--Pick lemma centred at
$\omega(\infty)$ gives estimates that are sharp for Möbius maps, that is, for
the extremal pairs~\eqref{eq:extremal}.  An annulus has no distinguished point
of this kind.  There, the available estimates only control the
pseudo-hyperbolic diameter of the image of each circle, through the Möbius
pseudodistance of the annulus~\cite{Schuster2006,Wang2025}, and the resulting
bounds for~$c_2$ are not sharp.

For the Fock space, the measure can be written down explicitly and it turns out
to be the law of $1+E$, where $E$ is a standard exponential random variable.
The same argument works for any radial weight, and leads to the following
criterion.  For a radial weight $W$ on $[0,\infty)$ we write $m_k=\int_0^\infty
t^kW(t)\,dt$ and let $F^2_W$ be the space of entire functions with
$\norm{f}_W^2=\frac1\pi\int_\C\abs{f(z)}^2W(\abs{z}^2)\,dA(z)<\infty$; see
Section~\ref{sec:prelim} for the precise definitions.

\begin{theorem}\label{thm:criterion} Let $W$ be a radial weight with $m_0=1$ and
let $R>0$.  Suppose that there is a Borel probability measure $\nu$ supported on
$[R^2,\infty)$ such that for every $k\ge1$,
\begin{equation}\label{eq:moment-conditions}
    \int t^k\,d\nu(t)\ \ge\ m_k
    \qquad\text{and}\qquad
    R^2\int t^{k-1}\,d\nu(t)\ \le\ m_k .
\end{equation}
Then $R$ is admissible for $F^2_W$.
\end{theorem}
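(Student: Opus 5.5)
Throughout, write $f(z)=\sum_k a_kz^k$, $g(z)=\sum_k b_kz^k$ and $d_k=\abs{a_k}^2-\abs{b_k}^2$. By radiality, $\norm{f}_W^2=\sum_k\abs{a_k}^2m_k$, so the claim is $\sum_k d_km_k\le0$. For the same reason, $\frac1{2\pi}\int_0^{2\pi}\bigl(\abs{f}^2-\abs{g}^2\bigr)(re^{i\theta})\,d\theta=\sum_k d_kr^{2k}$. The plan is to obtain a one-sided inequality on every circle $\abs{z}=r>R$, integrate it in $s=r^2$ against $\nu$, and then use \eqref{eq:moment-conditions} twice to pass from the $\nu$-moments to the $W$-moments $m_k$. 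We first dispose of the trivial cases: if $g\equiv0$ then $f\equiv0$, and if $f=\eta g$ with $\abs\eta=1$ there is nothing to prove.

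\emph{Step 1 (circle estimate).} Otherwise $\omega=f/g$ maps $\{\abs z>R\}\cup\{\infty\}$ holomorphically into $\D$. Put $\lambda=\omega(\infty)$. Schwarz--Pick in the coordinate $R/z$ gives
\[
\abs{\omega(z)-\lambda}\le (R/\abs z)\,\abs{1-\bar\lambda\omega(z)}.
\]
With $h=f-\lambda g$ and $G=g-\bar\lambda f$ this reads $\abs{zh}\le R\abs{G}$ on $\abs z>R$, and the identity
\[
\abs G^2-\abs h^2=(1-\abs\lambda^2)\bigl(\abs g^2-\abs f^2\bigr)
\]
links it back to $f,g$. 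On coefficients this should yield (this is the content of Lemma~\ref{lem:circle}) a splitting $d_k=P_k-N_k$ with $P_k,N_k\ge0$ and $N_0=0$, such that for every $s>R^2$
\begin{equation*}
\sum_{k\ge0}P_ks^k\ \le\ R^2\sum_{k\ge1}N_ks^{k-1}.
\end{equation*}
The natural candidates are $P_k=\abs{h_k}^2/(1-\abs\lambda^2)$ and $N_k=\abs{G_k}^2/(1-\abs\lambda^2)$. For the Möbius pairs \eqref{eq:extremal} this inequality is an equality, which is a good sign of sharpness.

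\emph{Step 2 (moment duality).} Integrate the circle inequality against $\nu$. If $\nu$ charges the endpoint $R^2$, take $s\downarrow R^2$ by monotone convergence first. This gives $\sum_kP_k\nu_k\le R^2\sum_{k\ge1}N_k\nu_{k-1}$, where $\nu_j=\int t^j\,d\nu$. Then chain the two halves of \eqref{eq:moment-conditions}, using $m_0=\nu_0=1$ for the $k=0$ term:
\[
\sum_kP_km_k\le\sum_kP_k\nu_k\le R^2\sum_{k\ge1}N_k\nu_{k-1}\le\sum_{k\ge1}N_km_k .
\]
This is exactly $\sum_kd_km_k\le0$. The interchanges of sums and integrals are justified by Tonelli, since all terms are nonnegative; possibly infinite $\nu$-moments only make the middle terms larger and do no harm, because the outer sums are finite as $f,g\in F^2_W$.

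\emph{Main obstacle.} The delicate point is Step 1, specifically the requirement $N_0=0$. The naive splitting from $\abs{zh}\le R\abs G$ leaves a term $R^2\abs{G_0}^2/s$ with $G_0=b_0-\bar\lambda a_0$. Integrated against $\nu$, it would need a control of $\nu_{-1}$ that \eqref{eq:moment-conditions} does not provide. To deal with it, I would first try to absorb this term using the plain domination $\sum_kd_ks^k\le0$, that is, work with a convex combination of the Schwarz--Pick inequality and the trivial circle inequality. Failing that, I would use the freedom to replace $g$ by $g-\bar\lambda f$ and $f$ by $f-\lambda g$ (the identity above preserves the norm difference up to the factor $1-\abs\lambda^2$), together with the vanishing of $h$ relative to $G$ at $\infty$, to show that the $k=0$ contribution enters only on the positive side. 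Settling this bookkeeping is where I expect the real work to lie; Step 2 is then routine.
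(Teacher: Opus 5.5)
There is a genuine gap, and it is where you put it, but the target you set is unreachable. A splitting $d_k=P_k-N_k$ with $P_k,N_k\ge0$ and $N_0=0$ forces $d_0=P_0\ge0$, that is, $\abs{a_0}\ge\abs{b_0}$. This already fails for the non-trivial pair $f\equiv0$, $g\equiv1$: there $\lambda=0$, $h=0$, $G=1$, so $N_0=1$. Hence the circle inequality of Step~1, in the form you state it, is false in general. Every splitting satisfies $P_0-N_0=d_0$, so neither the convex-combination idea nor the substitution $(f,g)\mapsto(f-\lambda g,\,g-\bar\lambda f)$ can repair it. As written, the proposal does not close.

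The missing observation is that no cancellation at $k=0$ is needed. The support condition on $\nu$ controls $\nu_{-1}$ by itself: since $t\ge R^2$ on the support, $R^2\int t^{-1}\,d\nu(t)\le1=m_0$. Keep your natural candidates $P_k=\abs{h_k}^2/(1-\abs{\lambda}^2)$ and $N_k=\abs{G_k}^2/(1-\abs{\lambda}^2)$, including the term $N_0$. Integrating $r^2M_2(r,h)^2\le R^2M_2(r,G)^2$ against $\nu$ then gives
\[
\sum_k P_km_k\le\sum_kP_k\nu_k\le N_0\,R^2\!\int\frac{d\nu(t)}{t}+R^2\sum_{k\ge1}N_k\nu_{k-1}\le\sum_kN_km_k ,
\]
which is $\sum_kd_km_k\le0$. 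This is exactly how the paper disposes of the $y_0$ term in \eqref{eq:chain}.

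With this one-line fix your argument is complete and is essentially the paper's. Your inequality $sP(s)\le R^2N(s)$ is \eqref{eq:step2} rewritten, using $\abs{h}=\delta(\omega,\lambda)\abs{G}$ and $N-P=Y-X$. The paper adds \eqref{eq:step1} to pass to the minimal splitting $x_k=(d_k)_+$, $y_k=(d_k)_-$. That yields the $\lambda$-free estimate $X(t)\le(R^2/t)Y(t)$, which is convenient for the equality analysis in Theorem~\ref{thm:main}. Your version skips that step: it proves $\norm{f-\lambda g}_W\le\norm{g-\bar\lambda f}_W$ directly and then uses Lemma~\ref{lem:identity} in $F^2_W$.
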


The condition $R^2\le m_1$ is necessary for admissibility
(Lemma~\ref{lem:trivial-bound}).  For many weights it is also sufficient.

\begin{theorem}\label{thm:decreasing}
Let $W$ be a radial weight that is non-increasing on $(0,\infty)$.  Then the
Korenblum radius of $F^2_W$ equals $\sqrt{m_1/m_0}$, and this radius is
admissible.
\end{theorem}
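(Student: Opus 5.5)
The plan is to deduce Theorem~\ref{thm:decreasing} from Theorem~\ref{thm:criterion}. After normalising so that $m_0=1$, the upper bound $\mathcal K\le\sqrt{m_1}$ is already given by Lemma~\ref{lem:trivial-bound}, through the pair $f\equiv c$, $g(z)=z$. So it remains to show that $R^2=m_1$ is admissible. For that I need a probability measure $\nu$ on $[m_1,\infty)$ whose moments satisfy
\[
    \int t^k\,d\nu\ \ge\ m_k,\qquad m_1\int t^{k-1}\,d\nu\ \le\ m_k\qquad(k\ge1).
\]
For $k=1$ the second inequality forces $\int d\nu\le1$, so it holds with equality, and the first reads $\int t\,d\nu\ge m_1$, which is automatic on $[m_1,\infty)$.

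The structural input is that a non-increasing weight is a mixture of uniform densities (layer-cake / Khinchine representation):
\[
    W(t)=\int_0^\infty \frac{\mathbf 1_{[0,s)}(t)}{s}\,d\mu(s),\qquad
    m_k=\int \frac{s^k}{k+1}\,d\mu(s),
\]
with $\mu$ a probability measure. Two facts come for free from this. First, the size-biased law $tW(t)\,dt/m_1$ has $(k-1)$-st moment exactly $m_k/m_1$, so it meets the second condition with equality. Second, the first condition holds for it by log-convexity, since $m_{k+1}\ge m_km_1$. Its only defect is that it charges $[0,m_1)$. The natural repair is to move that mass onto $[m_1,\infty)$ while keeping all moments of order $j\ge0$ at most $m_{j+1}/m_1$ and the shifted moments at least $m_k$.

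The Fock case tells me what the correct shape should be. There $\nu$ is the law of $1+E$, which is the exponential conditioned on $t\ge m_1=1$. This suggests a candidate of the form
\[
    \nu=p\,\delta_{m_1}+(1-p)\,\lambda,
\]
where $\lambda$ is a tail piece built from $W$ on $[m_1,\infty)$: either the conditioned law $W\mathbf 1_{[m_1,\infty)}/\int_{m_1}^\infty W$ or its size-biased analogue. The parameter $p$ is then fixed by the $k=1$ equality. I would first test this on a single uniform weight $W=\mathbf 1_{[0,1)}$. There $m_k=1/(k+1)$, and the constraints become
\[
    \frac1{k+1}\ \le\ \int t^k\,d\nu\ \le\ \frac2{k+2}\qquad\text{on }[1/2,\infty).
\]
Plain conditioning fails already at $k=2$, so the atom at $m_1$ (or some other mass concentrated near $m_1$) is genuinely needed.

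The main obstacle is this construction: finding one explicit $\nu$ that works for every non-increasing $W$ at once. The conditions are linear in $\nu$, but $R^2=m_1$ depends on the whole mixture $\mu$, so the uniform case does not linearise directly. My first attempt would be to find a two-parameter family $\nu_s$ that works for the uniform weight on $[0,s)$ with the threshold $m_1$ of the mixture in place of $s/2$, and then average $\nu_s$ against $\mu$. If that fails, I would fall back on the duality behind Theorem~\ref{thm:criterion}: show that the moment conditions are compatible by checking positivity of the associated Hankel-type forms. These are of the form $\int P(t)\,d\nu$ for polynomials $P\ge0$ on $[m_1,\infty)$, and the check reduces, via the uniform-mixture representation, to a one-variable polynomial inequality in $s$.
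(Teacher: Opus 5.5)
Your reduction to Theorem~\ref{thm:criterion} with $R^2=m_1$, the upper bound from Lemma~\ref{lem:trivial-bound}, and the uniform-mixture representation are all correct. Your family $p\,\delta_{m_1}+(1-p)\lambda$ even contains a measure that works. Take $\lambda$ to be the law of $T$ (density $W$) conditioned on $[m_1,\infty)$ and $p=\int_0^{m_1}W(t)\,dt$; this is the law of $\max(T,m_1)$, which is the paper's choice.

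However, the proposal stops at the decisive step, so there is a genuine gap.
\begin{itemize}
\item Nothing fixes $p$ at $k=1$: both $k=1$ conditions hold for every probability measure on $[m_1,\infty)$.
\item The componentwise averaging you try first cannot work. Suppose $\nu=\int\nu_b\,d\mu(b)$ is a probability measure and each $\nu_b$ satisfies the conditions for the uniform component on $[0,b)$. Integrating $m_1\,\nu_b([m_1,\infty))\le b/2$ against $\mu$ and using $\int b\,d\mu=2m_1$ forces $\nu_b$ to have mass exactly $b/(2m_1)$. Then $\int t\,d\nu_b\ge b/2$, and the $k=2$ condition $m_1\int t\,d\nu_b\le b^2/3$ forces $b\ge\frac32m_1$ for $\mu$-almost every $b$. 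This fails for $\mu=\frac12(\delta_1+\delta_3)$, where $m_1=1$.
\item The duality fallback runs in the hard direction. One certificate proves non-existence, as in Proposition~\ref{prop:gamma50}. Existence instead requires excluding every finite combination $\sum\sigma_kt^k-\sum\tau_km_1t^{k-1}$ on the unbounded set $[m_1,\infty)$, plus a closedness argument. Since the constraints are inequalities rather than prescribed moments, Hankel positivity is not the relevant test. Nothing in the proposal indicates how all this collapses to one polynomial inequality in $s$.
\end{itemize}

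Two arguments are missing.

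First, let $X=\max(T,s)$ with $s=m_1$. The first family of conditions is trivial because $X\ge T$. The whole second family follows from its $k=2$ member, $s\,\E\,(T-s)_+\le\Var T$. Indeed, the identity $x^j=j(j-1)\int_0^\infty a^{j-2}(x-a)_+\,da$ reduces $s\,\E\,X^j\le m_{j+1}$ to $s\,\E\,(X-a)_+\le\E\,[T(T-a)_+]$ for all $a>0$. This is immediate for $a\ge s$, and for $a<s$ it follows from the $k=2$ case.

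Second, for non-increasing $W$ one proves the stronger bound $s\,\E\,(T-s)_+\le\frac34\Var T$. Write $T=UB$ and average the pointwise inequality
\[
\frac{b^2}3-\frac{2s}3\cdot\frac{(b-s)_+^2}{b}\ \ge\ s^2+\frac{5s}6\,(b-2s)
\]
over $B$. Here the mixture is used only to verify this scalar inequality, not to build $\nu$. The global coupling through $s=m_1$, which defeats your componentwise construction, is absorbed by the linear term $b-2s$, whose average vanishes because $\E\,B=2s$.
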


Theorem~\ref{thm:decreasing} applies, for instance, to the weights
$\exp(-\abs{z}^m)$ for every $m>0$, for which $W(t)=e^{-t^{m/2}}$,
$m_k=\frac2m\Gamma\bigl(\frac{2k+2}m\bigr)$ and hence
$\mathcal{K}_W=\sqrt{\Gamma(4/m)/\Gamma(2/m)}$, and to
$\abs{z}^{2\beta}\exp(-\alpha\abs{z}^2)$ with $-1<\beta\le0$.  It also shows
that for $p=2$ the upper bounds obtained by Wee and Le~\cite{WeeLe2023Fock} for
several families of weighted Fock spaces are sharp and attained, which answers
two of their open questions in this case (Remark~\ref{rem:weele}).  For the
weights $\abs{z}^{2\beta}\exp(-\alpha\abs{z}^2)$ we can go further.

\begin{corollary}\label{cor:gamma}
Let $\alpha>0$, and let $F^2_{\alpha,\beta}$ be the space of entire functions
with
\[
    \int_\C\abs{f(z)}^2\abs{z}^{2\beta}e^{-\alpha\abs{z}^2}\,dA(z)<\infty.
\]
If $-1<\beta\le\kappa_1-1=5.4475\ldots$, where $\kappa_1$ is the unique positive
solution of $\Gamma(\kappa)=(\kappa/e)^\kappa$, then the Korenblum radius of
$F^2_{\alpha,\beta}$ equals $\sqrt{(\beta+1)/\alpha}$, and this radius is
admissible.  Since $\kappa_1>2\pi$, this includes all $-1<\beta\le2\pi-1$.
\end{corollary}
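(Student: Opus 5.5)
By the scaling in Corollary~\ref{cor:alpha}, the substitution $z\mapsto z/\sqrt\alpha$ maps $F^2_{\alpha,\beta}$ onto $F^2_{1,\beta}$ and scales admissible radii by $1/\sqrt\alpha$. So I take $\alpha=1$ and set $\kappa=\beta+1>0$. The normalized weight is $W(t)=t^{\beta}e^{-t}/\Gamma(\kappa)$, with moments $m_k=\Gamma(\kappa+k)/\Gamma(\kappa)=(\kappa)_k$, so $m_1=\kappa$. Lemma~\ref{lem:trivial-bound} then gives the upper bound $\sqrt{\kappa}$. For $\kappa\le1$ the weight is non-increasing and Theorem~\ref{thm:decreasing} finishes, but the argument below does not need this case split. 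It remains to apply Theorem~\ref{thm:criterion} with $R^2=\kappa$.

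For the measure I take $\nu$ to be the law of $T=\max(G,\kappa)$, where $G\sim\Gamma(\kappa,1)$ has density $W$. This mirrors the Fock case, since for $\kappa=1$ we have $G\sim E$. The measure is supported on $[\kappa,\infty)$. The first condition in \eqref{eq:moment-conditions} is then immediate, because $T\ge G$ gives $\E T^k\ge\E G^k=m_k$. The second condition reads $\kappa\,\E T^{k-1}\le m_k=\E G^k$. Write $\E T^{k-1}=\E G^{k-1}+\E\bigl[(\kappa^{k-1}-G^{k-1})_+\bigr]$. The condition becomes
\[
  D_k:=\E\bigl[G^{k-1}(G-\kappa)\bigr]-\kappa\,\E\bigl[(\kappa^{k-1}-G^{k-1})\mathbf 1_{G<\kappa}\bigr]\ \ge 0 .
\]
For $k=1$ both terms vanish, since $\E G=\kappa$. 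For $k=2$ the first term is $\Var G=\kappa$, and the second is $\kappa\,\E(\kappa-G)_+$. A direct computation gives $\E(\kappa-G)_+=\int_0^\kappa(\kappa-t)t^{\kappa-1}e^{-t}\,dt/\Gamma(\kappa)=\kappa^\kappa e^{-\kappa}/\Gamma(\kappa)$; it is half the mean absolute deviation of the gamma law. Hence $D_2\ge0$ is exactly $\Gamma(\kappa)\ge(\kappa/e)^\kappa$. I expect this inequality to hold precisely for $0<\kappa\le\kappa_1$, which is the source of the threshold. To confirm this I would show that $\log\Gamma(\kappa)-\kappa\log\kappa+\kappa$ is positive near $0$ and changes sign only once, using Stirling's formula and the convexity properties of $\psi$.

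The main obstacle is the passage from $k=2$ to all $k\ge3$. My plan is to normalize by $\kappa^{k-1}$ and set $x=G/\kappa$, so that $D_k/\kappa^{k}=\E\,\varphi_k(x)$ with
\[
  \varphi_k(x)=x^{k-1}(x-1)-(1-x^{k-1})_+ .
\]
I would then try to prove $\varphi_k\ge\varphi_2+c_k(x-1)$ pointwise for a suitable constant $c_k\ge0$; since $\E(x-1)=0$, this reduces $D_k\ge0$ to $D_2\ge0$. For $x\ge1$, $x^{k-1}(x-1)\ge x(x-1)$ is clear. For $x<1$ one needs $x^{k-1}(x-1)-1+x^{k-1}\ge x(x-1)-1+x+c_k(x-1)$, that is $x^{k}\ge x^2+c_k(x-1)$. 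Choosing $c_k$ as the tangent slope at $x=1$ does not directly work. If the pointwise route fails, I would instead show that $D_k/\kappa^{k}$ is non-decreasing in $k$. This follows by writing $D_{k+1}-\kappa D_k=\E[G^{k-1}(G-\kappa)^2]+\kappa\,\E[(G^{k-1}(\kappa-G))\mathbf 1_{G<\kappa}]\cdot(\text{sign check})$ and checking that each piece is nonnegative. The first piece clearly is. The second comes from $\kappa^k-G^k-\kappa(\kappa^{k-1}-G^{k-1})=G^{k-1}(\kappa-G)\ge0$ on $\{G<\kappa\}$, but it enters with a minus sign, so the two pieces must be balanced against each other. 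With $D_k\ge0$ for all $k$, Theorem~\ref{thm:criterion} gives admissibility of $\sqrt{\kappa}$, and combined with the upper bound this proves the corollary. The remark $\kappa_1>2\pi$ is a numerical check of the sign of $\Gamma(2\pi)-(2\pi/e)^{2\pi}$.
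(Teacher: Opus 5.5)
Your architecture matches the paper's: the same measure (the law of $\max(G,\kappa)$), the same reduction of the case $k=2$ to $\Gamma(\kappa)\ge(\kappa/e)^\kappa$, and the same threshold function. The gap is the step you flag yourself, passing from $k=2$ to all $k\ge3$. That step is the real content of the argument; the paper gets it from Proposition~\ref{prop:tail}, which the corollary's proof simply invokes. Neither of your routes establishes it.

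The pointwise inequality $\varphi_k\ge\varphi_2+c_k(x-1)$ fails for every choice of $c_k$, not just the tangent slope. For $x>1$ it says $x^{k-1}-x\ge c_k$, and letting $x\downarrow1$ forces $c_k\le0$. For $0\le x<1$, where $\varphi_k=x^k-1$, it says $c_k\ge(x^2-x^k)/(1-x)$, and letting $x\uparrow1$ forces $c_k\ge k-2\ge1$. In the recursion the correct identity is $D_{k+1}-\kappa D_k=\E[G^{k-1}(G-\kappa)^2]-\kappa\,\E[G^{k-1}(\kappa-G)_+]$. On $\{G<\kappa\}$ the negative term dominates pointwise, since $(\kappa-G)^2\le\kappa(\kappa-G)$ there. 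So the ``balancing'' needs a global argument, which you do not supply.

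The paper's idea is to write powers as mixtures of call functions, $x^j=j(j-1)\int_0^\infty a^{j-2}(x-a)_+\,da$. This reduces the condition for $k=j+1$ to $\kappa\,\E(T-a)_+\le\E[G(G-a)_+]$ for each $a>0$, with $T=\max(G,\kappa)$. For $a\ge\kappa$ this is trivial. For $a<\kappa$ the left side equals $\kappa\,\E T-\kappa a$, so it follows from the case $k=2$.

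Your own routes can also be repaired.
\begin{itemize}
\item Pointwise, allow a multiple of $\varphi_2$: $\varphi_k(x)\ge(k-1)\varphi_2(x)-(k-2)(x-1)$ for all $x\ge0$. For $x\ge1$ this is Bernoulli's inequality $x^{k-1}\ge1+(k-1)(x-1)$. For $0\le x<1$, after dividing by $x-1<0$, it is $1+x+\dots+x^{k-1}\le1+(k-1)x$. Hence $\E\varphi_k\ge(k-1)\E\varphi_2\ge0$, and this works for any weight.
\item In the recursion, size-bias: $\E[G^{k-1}u(G)]=m_{k-1}\E\,u(G')$, with $G'$ Gamma of shape $\kappa+k-1$. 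Since $G'$ is stochastically larger than $G$,
$\E(G'-\kappa)^2=\kappa+(k-1)+(k-1)^2\ge\kappa\ge\kappa\,\E(\kappa-G)_+\ge\kappa\,\E(\kappa-G')_+$.
The middle inequality is $D_2\ge0$.
\end{itemize}

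Two smaller points. First, the monotonicity of $h(\kappa)=\log\Gamma(\kappa)-\kappa\log\kappa+\kappa$ is only sketched; it follows from $h'=\psi-\log<0$, or from Binet's formula as in the paper. Second, $\kappa_1>2\pi$ needs no numerics, since Binet's formula gives $h(2\pi)=\mu(2\pi)>0$ exactly.
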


Theorem~\ref{thm:decreasing} and Corollary~\ref{cor:gamma} follow from
Proposition~\ref{prop:tail}.  Normalize $m_0=1$ and let $T$ have density $W$.
Then the law of $\max(T,m_1)$ satisfies the conditions of Theorem~\ref{thm:criterion} with
$R^2=m_1$ if and only if the single inequality
$m_1\,\E(T-m_1)_+\le\Var T$ holds.  For the weights of
Corollary~\ref{cor:gamma} this happens exactly when $\beta\le\kappa_1-1$.
Theorem~\ref{thm:criterion} with $R^2=m_1$ does not cover all radial weights.
For weights concentrated near a circle, no suitable measure exists
(Proposition~\ref{prop:thin} in Section~\ref{sec:numerics}), and the same
holds for the weight $\abs z^{98}e^{-\abs z^2}$ (Proposition~\ref{prop:gamma50}).
We do not know whether the Korenblum radius is smaller than $\sqrt{m_1/m_0}$
for such weights.  Pairs with $\abs f\le\abs g$ outside the circle
$\abs z=\sqrt{m_1/m_0}$ and $\abs f=\abs g$ on it never violate the principle
there, for any radial weight (Proposition~\ref{prop:blaschke} and the remark
following it).

\subsection*{Organization}
Section~\ref{sec:prelim} collects notation and two elementary identities.
Section~\ref{sec:circle} contains the Schwarz--Pick estimate at infinity, and
Section~\ref{sec:duality} the moment-duality argument, which proves
Theorem~\ref{thm:criterion}.  Theorem~\ref{thm:main} is proved in
Section~\ref{sec:fock}, and Theorem~\ref{thm:decreasing} and
Corollary~\ref{cor:gamma} in Section~\ref{sec:weights}.
Section~\ref{sec:remarks} contains further remarks and open problems,
including the limits of the criterion mentioned above.

\subsection*{Tool and computational resource disclosure}

Claude Opus~5.5 (Anthropic) was used as a research assistant for literature
search, numerical exploration of the moment problems, and drafting.  The proofs
of all theorems and propositions are free of numerical computation, except
Proposition~\ref{prop:gamma50}.  Its certificate was found by linear
programming and is printed in Section~\ref{sec:numerics}, and its proof
reduces to finitely many sign checks in exact rational arithmetic.  An exact
verifier, together with the further certificates mentioned in
Section~\ref{sec:numerics}, is archived at~\cite{WikstromZenodo}.  The author
has checked all arguments and takes full responsibility for the content.

\section{Preliminaries}\label{sec:prelim}

\subsection{Radial weights}
By a \emph{radial weight} we mean a measurable function $W \ge 0$ on
$[0,\infty)$, not almost everywhere zero, whose moments
\[
    m_k=\int_0^\infty t^k\,W(t)\,dt,\qquad k=0,1,2,\dots,
\]
are all finite.  For an entire function $f(z)=\sum_{k\ge0}a_kz^k$ let
\[
    M_2(r,f)^2=\frac{1}{2\pi}\int_{0}^{2\pi}\abs{f(re^{i\theta})}^2\,d\theta
    =\sum_{k\ge0}\abs{a_k}^2r^{2k}.
\]
Integrating in polar coordinates and substituting $t=r^2$, we find
\begin{equation}\label{eq:norm}
    \begin{split}
        \norm{f}_W^2
            &= \frac{1}{\pi}\int_\C\abs{f(z)}^2W(\abs{z}^2)\,dA(z) \\
            &= \int_{0}^{\infty} M_2(\sqrt t,f)^2\,W(t)\,dt
            = \sum_{k \ge 0} \abs{a_k}^2 m_k .
    \end{split}
\end{equation}
We write $F^2_W$ for the space of entire functions with $\norm{f}_W<\infty$.
Multiplying $W$ by a constant $c>0$ multiplies all norms by $\sqrt c$, so we may
and shall assume $m_0=1$ whenever convenient.  For $W(t)=e^{-t}$, we recover the
standard Fock space $F^2$ with $m_k=k!$.

\begin{definition}
Let $W$ be a radial weight.  A radius $R>0$ is \emph{admissible} for $F^2_W$ if
$\norm{f}_W\le\norm{g}_W$ for all $f,g\in F^2_W$ with $\abs{f(z)}\le\abs{g(z)}$
for $\abs{z}>R$.  The \emph{Korenblum radius} $\mathcal{K}_W$ is the supremum of the
admissible radii.
\end{definition}

If $R$ is admissible, so is every $R'<R$.  The following upper bound was
observed for the classical Fock spaces in~\cite{WeeLe2020}, and for a
variety of weighted Fock spaces in~\cite{WeeLe2023Fock}; see
Remark~\ref{rem:weele}.

\begin{lemma}\label{lem:trivial-bound}
    For every radial weight, $\mathcal{K}_W\le\sqrt{m_1/m_0}$.
\end{lemma}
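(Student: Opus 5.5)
Fix $R>\sqrt{m_1/m_0}$; we show that $R$ is not admissible. Then every admissible radius satisfies $R\le\sqrt{m_1/m_0}$, which gives $\mathcal{K}_W\le\sqrt{m_1/m_0}$. The test pair is the one of Wee and Le, rescaled: take $f\equiv R$ and $g(z)=z$.

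First check that both functions lie in $F^2_W$. By \eqref{eq:norm}, $\norm{f}_W^2=R^2m_0$ and $\norm{g}_W^2=m_1$, and both are finite because all moments of a radial weight are finite by definition. The domination hypothesis holds: for $\abs{z}>R$ we have $\abs{f(z)}=R<\abs{z}=\abs{g(z)}$. On the other hand, $R^2>m_1/m_0$ means exactly that $\norm{f}_W^2=R^2m_0>m_1=\norm{g}_W^2$. So $\norm{f}_W>\norm{g}_W$, and $R$ is not admissible.

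It remains to make sure the conclusion does not degenerate. Since $W$ is not almost everywhere zero, $m_0>0$, so the ratio $m_1/m_0$ is defined. We also have $m_1>0$, because $tW(t)$ vanishes almost everywhere only if $W$ does. Hence $\sqrt{m_1/m_0}$ is a positive number and the admissible radii form a subset of $(0,\sqrt{m_1/m_0}\,]$. There is no real obstacle here; the only point needing care is the positivity of $m_0$, which comes from the definition of a radial weight.
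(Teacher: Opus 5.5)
Your proof is correct and matches the paper's: both use the pair $f\equiv R$, $g(z)=z$ for $R>\sqrt{m_1/m_0}$. This pair gives $\abs{f}\le\abs{g}$ for $\abs{z}>R$ but $\norm{f}_W^2=R^2m_0>m_1=\norm{g}_W^2$, so no such $R$ is admissible. Your added remarks on the positivity of $m_0$ and $m_1$ are a harmless extra that the paper leaves implicit.
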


\begin{proof}
    Let $R>\sqrt{m_1/m_0}$, $f\equiv R$ and $g(z)=z$.  Then
    $\abs{f(z)}\le\abs{g(z)}$ for $\abs{z}\ge R$, while
    $\norm{f}_W^2=R^2m_0>m_1=\norm{g}_W^2$ by~\eqref{eq:norm}.
\end{proof}

\subsection{Two identities}
For $a, w \in \D$ let
\[
    \delta(w,a) = \abs[\Big]{\frac{w-a}{1-\bar aw}}
\]
denote the pseudo-hyperbolic distance.  We shall use the classical identity
\begin{equation}\label{eq:pseudo}
    1-\delta(w,a)^2=\frac{(1-\abs{a}^2)(1-\abs{w}^2)}{\abs{1-\bar aw}^2},
\end{equation}
and the following elementary identity, a complex version of the one used by
Schuster \cite{Schuster2006} and Wang \cite{Wang2006,Wang2025}.

\begin{lemma}\label{lem:identity}
Let $\lambda\in\D$, and let $A, B$ be vectors in a complex inner product
space.  Then
\[
    (1-\abs{\lambda}^2)\bigl(\norm{B}^2-\norm{A}^2\bigr)
    = \norm{B-\lambda A}^2 - \norm{A-\bar\lambda B}^2 .
\]
\end{lemma}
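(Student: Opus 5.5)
The plan is to expand both sides and compare them term by term, using only the sesquilinearity of the inner product. Write $\langle\cdot,\cdot\rangle$ for the inner product, linear in the first slot. The right-hand side is
\[
    \norm{B-\lambda A}^2 = \norm{B}^2 - 2\operatorname{Re}\bigl(\bar\lambda\langle B,A\rangle\bigr) + \abs{\lambda}^2\norm{A}^2 .
\]
For the second square, write $\operatorname{Re}$ out explicitly rather than introducing a new macro:
\[
    \norm{A-\bar\lambda B}^2 = \norm{A}^2 - 2\operatorname{Re}\bigl(\lambda\langle A,B\rangle\bigr) + \abs{\lambda}^2\norm{B}^2 .
\]

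The key step is to check that the two cross terms coincide, so that they cancel in the difference. Since $\langle A,B\rangle=\overline{\langle B,A\rangle}$, we have $\lambda\langle A,B\rangle=\overline{\bar\lambda\langle B,A\rangle}$. A complex number and its conjugate have the same real part, so the cross terms are equal.

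Subtracting the two expansions then leaves
\[
    \norm{B}^2+\abs{\lambda}^2\norm{A}^2-\norm{A}^2-\abs{\lambda}^2\norm{B}^2 = (1-\abs{\lambda}^2)\bigl(\norm{B}^2-\norm{A}^2\bigr),
\]
which is the claimed identity.

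There is no real obstacle here; the only point needing care is the placement of the conjugate on $\lambda$ in the second norm. That choice is exactly what makes the cross terms cancel. The hypothesis $\lambda\in\D$ plays no role in the algebra, since the identity holds for every complex $\lambda$. It only matters later, where the sign of $1-\abs{\lambda}^2$ is used.

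Since $\operatorname{Re}$ is not among the paper's macros, I would avoid it in the final text by writing $\bar\lambda\langle B,A\rangle+\lambda\langle A,B\rangle$ for the cross term. The same argument shows this sum appears identically in both expansions.
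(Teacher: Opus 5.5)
Your proof is correct and is essentially the paper's argument. Both expand the two squared norms, note that the cross terms $-2\operatorname{Re}(\bar\lambda\langle B,A\rangle)$ and $-2\operatorname{Re}(\lambda\langle A,B\rangle)$ coincide because the two numbers are complex conjugates, and collect the remaining terms into $(1-\abs{\lambda}^2)\bigl(\norm{B}^2-\norm{A}^2\bigr)$.
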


\begin{proof}
Expanding, the cross terms are $-2\operatorname{Re}(\bar\lambda\langle B,A\rangle)$
and $-2\operatorname{Re}(\lambda\langle A,B\rangle)$, which coincide.  The
remaining terms give
$(1-\abs{\lambda}^2)\norm{B}^2-(1-\abs{\lambda}^2)\norm{A}^2$.
\end{proof}

We shall apply Lemma~\ref{lem:identity} both to complex numbers and to
elements of~$F^2$.

\section{A Schwarz--Pick estimate at infinity}\label{sec:circle}

Throughout this section, $f(z)=\sum a_kz^k$ and $g(z)=\sum b_kz^k$ are entire
functions.  Write
\[
    d_k=\abs{a_k}^2-\abs{b_k}^2,\qquad
    x_k=\max(d_k,0),\qquad
    y_k=\max(-d_k,0),
\]
and let
\[
    X(t)=\sum_{k\ge0} x_k t^k,\qquad
    Y(t)=\sum_{k\ge0} y_k t^k.
\]
Since $f$ and $g$ are entire, $X$ and $Y$ are entire functions of $t$, and
\begin{equation}\label{eq:M2-difference}
    M_2(r,g)^2 - M_2(r,f)^2 = Y(r^2) - X(r^2).
\end{equation}

\begin{lemma}\label{lem:circle}
Let $R>0$ and suppose that $\abs{f(z)} \le \abs{g(z)}$ for all $\abs{z} > R$.  Then
\begin{equation}\label{eq:circle}
    X(t) \le \frac{R^2}{t}\,Y(t),\qquad \text{for $t\ge R^2$.}
\end{equation}
More precisely, suppose that $g\not\equiv0$ and that $f/g$ is not a
unimodular constant.  Then $\omega=f/g$ extends to a holomorphic function on
$U=\{\abs{z}>R\}\cup\{\infty\}$ with $a=\omega(\infty)\in\D$, and for every
$r>R$,
\begin{align}
    (1-\abs{a}^2)\,Y(r^2)
    &\le M_2(r,\,g-\bar af)^2 \label{eq:step1}\\
    &\le (1-\abs{a}^2)\,\frac{r^2}{r^2-R^2}\,\bigl(Y(r^2)-X(r^2)\bigr).
    \label{eq:step2}
\end{align}
\end{lemma}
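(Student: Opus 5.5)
The plan is to dispose of the trivial cases first, and then prove \eqref{eq:step1} coefficientwise with Lemma~\ref{lem:identity} and \eqref{eq:step2} pointwise on circles with the Schwarz--Pick lemma at~$\infty$. The inequality \eqref{eq:circle} then follows by chaining the two.

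\emph{Trivial cases.} If $g\equiv0$, then $f=0$ on $\{\abs{z}>R\}$, so $f\equiv0$ and $X=Y=0$. If $f=\eta g$ with $\abs{\eta}=1$, then $d_k=0$ for all $k$, so again $X=Y=0$. In both cases \eqref{eq:circle} is trivial.

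\emph{Extending $\omega$.} Otherwise, $g$ has only finitely many zeros in any annulus $R<\abs{z}<\rho$. Since $\abs{f}\le\abs{g}$ there, each zero of $g$ in $\{\abs{z}>R\}$ is a zero of $f$ of at least the same order. Hence $\omega=f/g$ is holomorphic on $\{\abs{z}>R\}$ and bounded by~$1$. As a bounded holomorphic function in a punctured neighbourhood of $\infty$, it extends holomorphically to $U$. By the maximum principle, $\abs{\omega}<1$ on $U$ unless $\omega$ is a unimodular constant, which has been excluded. In particular $a=\omega(\infty)\in\D$.

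\emph{Step \eqref{eq:step1}.} Apply Lemma~\ref{lem:identity} to the complex numbers $A=a_k$, $B=b_k$ with $\lambda=\bar a$. This gives
\[
(1-\abs{a}^2)(\abs{b_k}^2-\abs{a_k}^2)=\abs{b_k-\bar a a_k}^2-\abs{a_k-ab_k}^2\le \abs{b_k-\bar aa_k}^2 .
\]
Hence $(1-\abs{a}^2)y_k\le\abs{b_k-\bar aa_k}^2$ for every $k$; this is trivial when $y_k=0$. Multiplying by $r^{2k}$ and summing, Parseval's identity gives \eqref{eq:step1}, since $b_k-\bar a a_k$ are the Taylor coefficients of $g-\bar af$.

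\emph{Step \eqref{eq:step2}.} Set $\tilde\omega(\zeta)=\omega(R/\zeta)$. This is a holomorphic self-map of $\D$ with $\tilde\omega(0)=a$. The Schwarz--Pick lemma gives $\delta(\omega(z),a)\le R/\abs{z}$ for $\abs{z}>R$. By \eqref{eq:pseudo}, on $\abs{z}=r$,
\[
\frac{(1-\abs{a}^2)(1-\abs{\omega}^2)}{\abs{1-\bar a\omega}^2}\ge 1-\frac{R^2}{r^2}.
\]
Multiplying by $\abs{g}^2$ gives
\[
\Bigl(1-\frac{R^2}{r^2}\Bigr)\abs{g-\bar af}^2\le(1-\abs{a}^2)\bigl(\abs{g}^2-\abs{f}^2\bigr)
\]
away from the finitely many zeros of $g$ on the circle, and by continuity everywhere on it. Averaging over the circle and using \eqref{eq:M2-difference} yields \eqref{eq:step2}.

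\emph{Conclusion.} Chaining the two steps and dividing by $1-\abs{a}^2>0$ gives
\[
(r^2-R^2)\,Y(r^2)\le r^2\bigl(Y(r^2)-X(r^2)\bigr),
\]
that is, $r^2X(r^2)\le R^2Y(r^2)$ for all $r>R$. Continuity of the entire functions $X$ and $Y$ extends this to $t=R^2$, which is \eqref{eq:circle}.

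The only delicate point is the extension of $\omega$ to $U$ with $\abs{a}<1$. This requires removability at the zeros of $g$ and at $\infty$, together with the exclusion of the unimodular-constant case. The remaining steps are direct computations.
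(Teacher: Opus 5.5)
Your proposal is correct and follows the paper's proof essentially step for step. It has the same trivial cases, the same extension of $\omega=f/g$ to $U$ with Schwarz--Pick centred at $\omega(\infty)$, Lemma~\ref{lem:identity} with $\lambda=\bar a$ for \eqref{eq:step1}, and the pointwise bound from \eqref{eq:pseudo} averaged over circles for \eqref{eq:step2}. The only cosmetic difference is that you prove \eqref{eq:step1} coefficientwise as an inequality, whereas the paper writes it as an identity that displays the discarded non-negative terms explicitly.
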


\begin{proof}
If $g\equiv0$ then $f\equiv0$, and if $f=\eta g$ with $\abs{\eta}=1$ then
$d_k=0$ for all $k$; in both cases $X=Y=0$.  Assume neither holds.

Near a zero of $g$ in $\{\abs{z}>R\}$ the quotient $f/g$ is bounded, so it
has a removable singularity there.  Hence $\omega=f/g$ is holomorphic on
$\{\abs{z}>R\}$ with $\abs{\omega}\le1$.  Being bounded near $\infty$, it
extends holomorphically to the disk $U$ of the Riemann sphere.  If
$\abs{\omega}$ attained the value $1$ in $U$, the maximum principle would
make $\omega$ a unimodular constant, which we have excluded.  Thus
$\omega(U)\subset\D$; put $a=\omega(\infty)$.

The function $\psi(u)=\omega(R/u)$ is a holomorphic self-map of $\D$ with
$\psi(0)=a$, so by the Schwarz--Pick lemma
\begin{equation}\label{eq:schwarz-pick}
    \delta\bigl(\omega(z),a\bigr)\le\frac{R}{\abs{z}},\qquad \text{for $\abs{z}>R$.}
\end{equation}

Let $S = \{ k \ge 0 : \abs{a_k} < \abs{b_k}\}$, so that $Y(t) = \sum_{k\in S}
(\abs{b_k}^2-\abs{a_k}^2) t^k$. Lemma~\ref{lem:identity} with $\lambda=\bar a$,
$A=a_k$, $B=b_k$ gives
\begin{equation}\label{eq:dropped}
    (1-\abs{a}^2)\,Y(r^2)
    = M_2(r,g-\bar af)^2
      -\sum_{k\in S} \abs{a_k-ab_k}^2 r^{2k}
      -\sum_{k\notin S} \abs{b_k-\bar a a_k}^2 r^{2k},
\end{equation}
because $M_2(r,g-\bar af)^2=\sum_{k\ge0}\abs{b_k-\bar aa_k}^2r^{2k}$.  This
proves~\eqref{eq:step1}.

On the circle $\abs{z}=r$ we have $g-\bar af=g\,(1-\bar a\omega)$,
so~\eqref{eq:pseudo} and~\eqref{eq:schwarz-pick} give
\begin{equation}\label{eq:pointwise}
    \abs{g-\bar af}^2
    = (1-\abs{a}^2)\,\frac{\abs{g}^2(1-\abs{\omega}^2)}{1-\delta(\omega,a)^2}
    \le (1-\abs{a}^2)\,\frac{r^2}{r^2-R^2}\,\bigl(\abs{g}^2-\abs{f}^2\bigr).
\end{equation}
Integrating over the circle and using~\eqref{eq:M2-difference}
gives~\eqref{eq:step2}.

Combining~\eqref{eq:step1} and~\eqref{eq:step2} and dividing by
$1-\abs{a}^2>0$, we get $Y(t)\le\frac{t}{t-R^2}\bigl(Y(t)-X(t)\bigr)$ for
$t>R^2$, which is~\eqref{eq:circle}.  The case $t=R^2$ follows by continuity.
\end{proof}

\section{Moment duality}\label{sec:duality}

\begin{proof}[Proof of Theorem~\ref{thm:criterion}]
Let $f,g\in F^2_W$ with $\abs{f(z)} \le \abs{g(z)}$ for $\abs{z} > R$, and let
$x_k, y_k, X, Y$ be as in Section~\ref{sec:circle}.  By~\eqref{eq:norm},
\[
    \norm{f}_W^2 - \norm{g}_W^2 = \sum_{k\ge0}(x_k-y_k)\,m_k,
\]
and $\sum_k y_k m_k \le \norm{g}_W^2 < \infty$.  Since $\nu$ is a probability
measure, the first condition in~\eqref{eq:moment-conditions} also holds for
$k=0$.  Using it, then~\eqref{eq:circle} on the support of $\nu$, and finally
the second condition in~\eqref{eq:moment-conditions} together with $R^2/t \le 1$
on $[R^2,\infty)$, we obtain from Tonelli's theorem
\begin{equation}\label{eq:chain}
\begin{split}
    \sum_{k \ge 0} x_k m_k &\le \sum_{k \ge 0} x_k \int t^k\,d\nu(t)
        = \int X\,d\nu \le \int \frac{R^2}{t}\,Y(t)\,d\nu(t) \\
    &= y_0\int\frac{R^2}{t}\,d\nu(t) + \sum_{k \ge 1} y_k\,R^2 \!\int t^{k-1}\,d\nu(t)
     \le\sum_{k \ge 0} y_k m_k.
\end{split}
\end{equation}
Hence, $\norm{f}_W \le \norm{g}_W$.
\end{proof}

\begin{remark}
Theorem~\ref{thm:criterion} is what one obtains by averaging the circle
estimates~\eqref{eq:circle} against a probability measure on
$[R^2,\infty)$; it is a sufficient condition, and we do not claim a
converse.  The conditions~\eqref{eq:moment-conditions} say that the
moments of~$\nu$ lie above the moments of the weight, while the moments of~$\nu$
damped by the factor $R^2/t$ lie below them;
compare~\cite[Lemma~3.1]{Wikstrom2026}.  Note that~$\nu$ may have an atom
at~$R^2$.
\end{remark}

\section{The Fock space}\label{sec:fock}

For the Fock space, $W(t)=e^{-t}$, so $m_k=k!$.  Let $\nu$ be the probability
measure on $[1,\infty)$ with density $e^{-(t-1)}$, that is, the law of $1+E$
with $E$ a standard exponential random variable.

\begin{lemma}\label{lem:nu}
For every integer $k\ge0$,
\[
    \int t^k\,d\nu(t)=k!\sum_{i=0}^k\frac1{i!}.
\]
Consequently, $\int t^k\,d\nu\ge k!$ for $k\ge0$, and
$\int t^{k-1}\,d\nu\le k!$ for $k\ge1$.
\end{lemma}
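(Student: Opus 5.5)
The moment formula follows from the binomial theorem. Since $\nu$ is the law of $1+E$ with $E$ standard exponential, and $\E E^j=j!$, we have
\[
    \int t^k\,d\nu(t)=\E(1+E)^k=\sum_{j=0}^k\binom kj\,\E E^j=\sum_{j=0}^k\frac{k!}{(k-j)!}
    =k!\sum_{i=0}^k\frac1{i!},
\]
after reindexing with $i=k-j$. The same identity can also be reached directly: substitute $t=1+s$ in $\int_1^\infty t^ke^{-(t-1)}\,dt$ and expand. A third route is to integrate by parts, which gives the recursion $\mu_k=1+k\mu_{k-1}$ with $\mu_0=1$ for $\mu_k=\int t^k\,d\nu$; the closed form then follows by induction. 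I would present the binomial computation, since it is one line.

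The two consequences follow by comparing with $k!$. For the lower bound, the sum $\sum_{i=0}^k 1/i!$ contains the term $i=0$ equal to $1$ and all other terms are nonnegative. Hence $\int t^k\,d\nu\ge k!$, with equality only for $k=0$.

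For the upper bound with $k\ge1$, apply the formula with $k-1$ in place of $k$:
\[
    \int t^{k-1}\,d\nu=(k-1)!\sum_{i=0}^{k-1}\frac1{i!}\le (k-1)!\,e.
\]
This is at most $k!$ as soon as $k\ge e$, that is, for $k\ge3$. The cases $k=1,2$ are checked directly:
\[
    \mu_0=1\le1!,\qquad \mu_1=2\le2!.
\]
Alternatively, one can avoid $e$ altogether. Since $1/i!\le 1/2^{i-1}$ for $i\ge1$, we get $\sum_{i=0}^{k-1}1/i!<3$, which settles every $k\ge3$. The cleanest uniform version is to show $\sum_{i=0}^{k-1}1/i!\le k$ for all $k\ge1$, by induction on $k$: the base case is $1\le1$, and each step adds $1/k!\le 1$ to the left side while the right side grows by $1$.

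There is no real obstacle here. The only point needing care is the small cases $k=1,2$ of the upper bound, where the crude estimate by $e$ does not apply. Both inequalities are equalities at $k=1$, since $\mu_1=2$ exceeds $1!=1$ and $\mu_0=1$ equals $1!$. The case $k=1$ thus gives equality $R^2\int t^{0}\,d\nu=m_1$, which is consistent with the sharpness of the radius $1$ and with the equality analysis to come in Theorem~\ref{thm:main}.
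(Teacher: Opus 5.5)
Your proof is correct and follows the paper's route: the binomial expansion of $\E(1+E)^k$ gives the moment formula, and the upper bound reduces to $\sum_{i=0}^{k-1}1/i!\le k$, checked directly for $k=1,2$ and via $e<3\le k$ for $k\ge3$ (your induction is a tidy uniform alternative). One small slip in your closing remark: at $k=1$ only the second inequality is an equality ($\mu_0=1=1!$), while the first is strict ($\mu_1=2>1$); the second inequality is also an equality at $k=2$ ($\mu_1=2=2!$).
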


\begin{proof}
By the binomial theorem and $\int_0^\infty s^je^{-s}\,ds=j!$,
\[
    \int_1^\infty t^ke^{-(t-1)}\,dt
    =\int_0^\infty(1+s)^ke^{-s}\,ds
    =\sum_{j=0}^k\binom kj j!
    =k!\sum_{i=0}^k\frac1{i!}.
\]
The first inequality is immediate.  For the second, we must show
$\sum_{i=0}^{k-1}1/i!\le k$.  Equality holds for $k=1,2$, and for $k\ge3$ the
left-hand side is less than $e<3\le k$.
\end{proof}

\begin{proof}[Proof of Theorem~\ref{thm:main}]
By Lemma~\ref{lem:nu}, $\nu$ satisfies~\eqref{eq:moment-conditions} with
$R=1$ and $m_k=k!$.  Theorem~\ref{thm:criterion} therefore gives
$\norm{f}\le\norm{g}$.

If $f=\eta g$ with $\abs{\eta}=1$, clearly $\norm{f}=\norm{g}$.  For the
pairs~\eqref{eq:extremal} we have, for $\abs{z}\ge1$,
\[
    \abs{z+\bar a\eta}^2 - \abs{\eta+az}^2 = (1-\abs{a}^2)(\abs{z}^2-1) \ge 0,
\]
so $\abs{f} \le \abs{g}$ there, and
$\norm{f}^2 = \abs{c}^2(1+\abs{a}^2) = \norm{g}^2$.

Conversely, suppose that $\norm{f} = \norm{g}$ and that $f$ is not a unimodular
multiple of $g$.  Then $g \not\equiv 0$.  If $\omega = f/g$ were a constant, it
would have modulus less than $1$, forcing $\norm{f} < \norm{g}$.  So $\omega$ is
non-constant, and Lemma~\ref{lem:circle} applies with $R=1$ and
$a=\omega(\infty)\in\D$.

The two ends of~\eqref{eq:chain} are $\sum_k x_k m_k$ and $\sum_k y_k m_k$,
whose difference is $\norm{f}^2 - \norm{g}^2 = 0$.  Hence, every inequality
in~\eqref{eq:chain} is an equality.  In particular,
$\int(Y(t)/t-X(t))\,d\nu(t)=0$ with a non-negative integrand.  Since $\nu$ has a
positive density on $(1,\infty)$ and $X, Y$ are continuous, $X(t)=Y(t)/t$ for
all $t>1$.  This is exactly the statement that equality holds in
$Y(t)\le\frac{t}{t-1}(Y(t)-X(t))$, so for every $r>1$ both~\eqref{eq:step1}
and~\eqref{eq:step2} are equalities.

Equality in~\eqref{eq:step2} means that the integrals over $\abs{z}=r$ of the
two sides of~\eqref{eq:pointwise} agree.  The function $g$ has only finitely
many zeros on each circle and $\abs{\omega} < 1$, so
$\abs{g}^2(1-\abs{\omega}^2) > 0$ almost everywhere on the circle.  Hence
$\delta(\omega(z),a)=1/r$ for almost every, and by continuity every, $z$ with
$\abs{z}=r$.  As $r>1$ is arbitrary, the function $\psi(u)=\omega(1/u)$
satisfies $\delta(\psi(u), \psi(0)) = \abs{u}$ for $0 < \abs{u} < 1$.  By the
equality case in Schwarz--Pick, $\psi$ is an automorphism of $\D$, i.e.\
$\psi(u)=(a+\eta u)/(1+\bar a\eta u)$ for some $\abs{\eta}=1$.  Therefore,
\[
    \omega(z)=\frac{\eta+az}{z+\bar a\eta},
\]
and $(z+\bar a\eta)f(z)=(\eta+az)g(z)$ on $\{\abs{z}>1\}$, hence on $\C$.  At
$z_0=-\bar a\eta$ the factor $\eta+az_0=\eta(1-\abs{a}^2)$ does not vanish,
so $g(z_0)=0$.  We can thus write
\[
    f(z)=(\eta+az)\,h(z),\qquad g(z)=(z+\bar a\eta)\,h(z)
\]
with $h$ entire.  Moreover, $h\in F^2$, since $\abs{h} \le 2\abs{g}$ for
$\abs{z-z_0} \ge 1/2$ and $h$ is bounded near $z_0$.

Now apply Lemma~\ref{lem:identity} in $F^2$ with $\lambda=\bar a$, $A=f$,
$B=g$.  We have $g-\bar af=(1-\abs{a}^2)\,zh$ and $f-ag=(1-\abs{a}^2)\,\eta h$;
in particular $zh=(g-\bar af)/(1-\abs{a}^2)$ belongs to $F^2$.  Hence
\[
    (1-\abs{a}^2)\bigl(\norm{g}^2-\norm{f}^2\bigr)
    =(1-\abs{a}^2)^2\bigl(\norm{zh}^2-\norm{h}^2\bigr).
\]
Writing $h(z)=\sum h_kz^k$, we have
$\norm{zh}^2-\norm{h}^2=\sum_k\abs{h_k}^2\bigl((k+1)!-k!\bigr) =\sum_k k\cdot
k!\,\abs{h_k}^2$.  Thus $\norm{f}=\norm{g}$ forces $h_k=0$ for $k\ge1$, i.e.\
$h\equiv c$ is constant, and $(f,g)$ is of the form~\eqref{eq:extremal}.
\end{proof}

\begin{corollary}\label{cor:alpha} Let $\alpha>0$ and let $F^2_\alpha$ be the
space of entire functions with
$\norm{f}_\alpha^2=\frac{\alpha}{\pi}\int_\C\abs{f}^2e^{-\alpha\abs{z}^2}\,dA<\infty$.
If $f,g\in F^2_\alpha$ and $\abs{f(z)}\le\abs{g(z)}$ for
$\abs{z}>1/\sqrt\alpha$, then $\norm{f}_\alpha\le\norm{g}_\alpha$.  The
Korenblum radius of $F^2_\alpha$ is $1/\sqrt\alpha$.
\end{corollary}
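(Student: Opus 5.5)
The plan is to reduce Corollary~\ref{cor:alpha} to Theorem~\ref{thm:main} by a dilation, and to obtain the upper bound from Lemma~\ref{lem:trivial-bound}.

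\emph{Dilation.} For an entire function $f$, put $\tilde f(w)=f(w/\sqrt\alpha)$. The substitution $z=w/\sqrt\alpha$, with $dA(z)=\alpha^{-1}\,dA(w)$, gives
\[
    \norm{f}_\alpha^2=\frac{\alpha}{\pi}\int_\C\abs{f(z)}^2e^{-\alpha\abs{z}^2}\,dA(z)
    =\frac1\pi\int_\C\abs{\tilde f(w)}^2e^{-\abs{w}^2}\,dA(w)=\norm{\tilde f}^2 .
\]
Hence $f\mapsto\tilde f$ is an isometry of $F^2_\alpha$ onto $F^2$; it is surjective because its inverse is $h\mapsto h(\sqrt\alpha\,\cdot)$.

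\emph{Admissibility.} Suppose $\abs{f(z)}\le\abs{g(z)}$ for $\abs{z}>1/\sqrt\alpha$. Setting $z=w/\sqrt\alpha$, this says exactly that $\abs{\tilde f(w)}\le\abs{\tilde g(w)}$ for $\abs{w}>1$. Theorem~\ref{thm:main} then gives $\norm{\tilde f}\le\norm{\tilde g}$, which is $\norm{f}_\alpha\le\norm{g}_\alpha$. So $1/\sqrt\alpha$ is admissible.

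\emph{Optimality.} For the upper bound, take the weight $W(t)=\alpha e^{-\alpha t}$, which has the same normalization as $\norm{\cdot}_\alpha$ by~\eqref{eq:norm}. Its moments are $m_0=1$ and $m_1=1/\alpha$. Lemma~\ref{lem:trivial-bound} then gives $\mathcal K\le\sqrt{m_1/m_0}=1/\sqrt\alpha$, and since $1/\sqrt\alpha$ is admissible, the Korenblum radius equals $1/\sqrt\alpha$. Alternatively, the pair $f\equiv R$, $g(z)=z$ with $R>1/\sqrt\alpha$ shows directly that no larger radius is admissible.

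There is no real obstacle here. The only point needing care is the Jacobian factor $\alpha^{-1}$ in the change of variables, which cancels exactly against the normalization $\alpha/\pi$ in $\norm{\cdot}_\alpha$.
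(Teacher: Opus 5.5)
Your proposal is correct and follows the paper's proof: the dilation $f\mapsto f(\,\cdot\,/\sqrt\alpha)$ is an isometry of $F^2_\alpha$ onto $F^2$ that carries the hypothesis to that of Theorem~\ref{thm:main}, and the upper bound comes from Lemma~\ref{lem:trivial-bound} with $m_1/m_0=1/\alpha$. You also correctly identify the weight $W(t)=\alpha e^{-\alpha t}$ and check the Jacobian, details the paper leaves implicit.
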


\begin{proof}
The map $f\mapsto f(\,\cdot\,/\sqrt\alpha)$ is an isometry of $F^2_\alpha$
onto $F^2$ which transforms the hypothesis into the hypothesis of
Theorem~\ref{thm:main}.  The upper bound is Lemma~\ref{lem:trivial-bound},
since $m_1/m_0=1/\alpha$.
\end{proof}

\begin{remark}
The measure $\nu$ is not unique.  For instance, the law of $\max(E,1)$ also
satisfies \eqref{eq:moment-conditions}; this is the case $\beta=0$ of
Section~\ref{sec:weights}.  The characterization of the extremal pairs,
however, only used that $\nu$ has full support in $[1,\infty)$.
\end{remark}

\section{Fock-type spaces with radial weights}\label{sec:weights}

In this section, $W$ is a radial weight with $m_0=1$, and it is convenient to
use probabilistic notation: $T$ denotes a random variable with density $W$, so
that $\E\,T^k=m_k$, $s=m_1=\E\,T$ and $\Var T=m_2-s^2$.  We write
$u_+=\max(u,0)$.

\begin{proposition}\label{prop:tail}
The law $\nu$ of $\max(T,s)$ satisfies \eqref{eq:moment-conditions} with
$R^2=s$ if and only if
\begin{equation}\label{eq:tail}
    s\,\E\,(T-s)_+\ \le\ \Var T .
\end{equation}
In that case $\mathcal{K}_W=\sqrt{m_1/m_0}$, and this radius is admissible.
\end{proposition}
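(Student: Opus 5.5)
The plan is to verify the two families of inequalities in \eqref{eq:moment-conditions} directly for $M=\max(T,s)$, with $R^2=s$. The $k=2$ case should turn out to be exactly \eqref{eq:tail}, and every $k\ge3$ should reduce to the cases $k=1,2$ through a pointwise inequality between explicit piecewise polynomials. Once this is done, Theorem~\ref{thm:criterion} gives admissibility of $\sqrt{s}$. Lemma~\ref{lem:trivial-bound}, with $m_0=1$, gives $\mathcal{K}_W\le\sqrt{m_1}$, so $\mathcal{K}_W=\sqrt{m_1/m_0}$ and this radius is admissible.

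\textbf{Easy conditions and the reduction to $k=2$.} The law $\nu$ of $M$ is a probability measure supported on $[s,\infty)$. Since $M\ge T\ge0$, we get $\E M^k\ge\E T^k=m_k$ for all $k$, so the first condition is automatic. For the second condition, define for $k\ge1$
\[
    \varphi_k(t)=t^k-s\max(t,s)^{k-1},
\]
so that the requirement reads $\E\varphi_k(T)\ge0$. For $k=1$ we have $\E\varphi_1(T)=\E T-s=0$. For $k=2$, using $\E\max(T,s)=s+\E(T-s)_+$, we get
\[
    \E\varphi_2(T)=m_2-s^2-s\,\E(T-s)_+=\Var T-s\,\E(T-s)_+ .
\]
Hence the $k=2$ condition is equivalent to \eqref{eq:tail}. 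This already proves the ``only if'' direction.

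\textbf{Sufficiency for $k\ge3$: the main step.} This is where the real work lies. The plan is to find constants $c\ge0$ and $d$ such that $\varphi_k\ge c\varphi_2+d\varphi_1$ pointwise on $[0,\infty)$. Taking expectations then gives $\E\varphi_k(T)\ge c\,\E\varphi_2(T)\ge0$. By homogeneity (each $\varphi_j$ is homogeneous of degree $j$ in $(t,s)$), it is enough to treat $s=1$.

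All the $\varphi_j$ vanish at $t=1$. For $\psi=\varphi_k-c\varphi_2-d\varphi_1$ to have a minimum there, its left derivative at $1$ must be $\le0$ and its right derivative $\ge0$. This leads to the choice $c=k-1$, $d=2-k$, and we then check both sides.

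\emph{For $t\ge1$:}
\[
    \psi(t)=(t-1)\bigl[t^{k-1}-(k-1)t+(k-2)\bigr].
\]
The bracket is nonnegative by convexity, since $t^{k-1}\ge1+(k-1)(t-1)$.

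\emph{For $0\le t<1$:} a direct computation gives
\[
    \psi(t)=(t-1)\sum_{j=2}^{k-1}(t^j-t).
\]
This is a product of two nonpositive factors, hence nonnegative.

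Therefore $\E\varphi_k(T)\ge(k-1)\E\varphi_2(T)\ge0$ for every $k\ge3$. Together with the cases $k=1,2$, this completes the verification of \eqref{eq:moment-conditions}, and the conclusion about $\mathcal{K}_W$ follows as described at the start.
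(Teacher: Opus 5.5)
Your proof is correct, but for $k\ge3$ it takes a different route from the paper.

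\textbf{The paper's route.} It writes $x^j=j(j-1)\int_0^\infty a^{j-2}(x-a)_+\,da$. This reduces all higher moment conditions to the stop-loss inequalities $s\,\E(X-a)_+\le\E[T(T-a)_+]$ for every $a>0$. These are checked separately for $a\ge s$ and for $0<a<s$, and the latter case uses the $k=2$ condition. In effect, the paper shows that $\max(T,s)$ is dominated by the size-biased variable $\hat T$ in the increasing convex order.

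\textbf{Your route.} You give an explicit pointwise certificate
\[
\varphi_k\ \ge\ (k-1)s^{k-2}\varphi_2-(k-2)s^{k-1}\varphi_1\quad\text{on }[0,\infty).
\]
These are your $s=1$ constants rescaled. When you say ``by homogeneity'', the constants $c,d$ have to carry these powers of $s$, or equivalently you apply the $s=1$ case to $T/s$; this is not a gap. Your factorizations on $[0,1)$ and $[1,\infty)$ both check out.

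\textbf{What each approach buys.} Your argument is shorter and purely algebraic. It also gives a quantitative bound on the slack,
\[
m_k-s\,\E X^{k-1}\ \ge\ (k-1)s^{k-2}\bigl(\Var T-s\,\E(T-s)_+\bigr),
\]
so each higher-order slack is at least a fixed multiple of the $k=2$ slack. The paper's argument proves a structural statement: it controls $\E\phi(\max(T,s))$ for every increasing convex $\phi$, not just for powers. The remark after the proposition uses exactly this to explain why the single condition \eqref{eq:tail} controls all the moment conditions at once.
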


\begin{proof}
Put $X=\max(T,s)$.  Since $X\ge T$, the first condition in
\eqref{eq:moment-conditions} holds for every $k$.  The second condition reads
$s\,\E\,X^{k-1}\le m_k$.  For $k=1$ it is the equality $s=m_1$.  Since
$\E\,T=s$, we have $\E\,(s-T)_+=\E\,(T-s)_+$ and hence
$\E\,X=s+\E\,(T-s)_+$.  So for $k=2$ the condition $s\,\E\,X\le m_2$ is
exactly \eqref{eq:tail}.

Assume \eqref{eq:tail}, and let $k=j+1$ with $j\ge2$.  For $x\ge0$ we have
$x^j=j(j-1)\int_0^\infty a^{j-2}(x-a)_+\,da$, so by Tonelli's theorem
\begin{align*}
    s\,\E\,X^j&=j(j-1)\int_0^\infty a^{j-2}\,s\,\E\,(X-a)_+\,da,\\
    m_{j+1}=\E\,[T\cdot T^j]&=j(j-1)\int_0^\infty a^{j-2}\,\E\,[T(T-a)_+]\,da .
\end{align*}
It therefore suffices to show that $s\,\E\,(X-a)_+\le\E\,[T(T-a)_+]$ for every
$a>0$.  If $a\ge s$, then $(X-a)_+=(T-a)_+$ and
$T(T-a)_+\ge a(T-a)_+\ge s(T-a)_+$.  If $0<a<s$, then $X-a>0$, and by the case
$k=2$,
\[
    s\,\E\,(X-a)_+=s\,\E\,X-sa\le m_2-sa=\E\,[T(T-a)]\le\E\,[T(T-a)_+] .
\]
The last assertion follows from Theorem~\ref{thm:criterion} and
Lemma~\ref{lem:trivial-bound}.
\end{proof}

\begin{remark}
Let $\hat T$ be the size-biased variable, with density $tW(t)/s$, so that
$\E\,\hat T^{\,j}=m_{j+1}/s$.  The second family of conditions in
\eqref{eq:moment-conditions} says that $\E\,X^j\le\E\,\hat T^{\,j}$ for all
$j\ge0$.  Since $\E\,[T(T-a)_+]=s\,\E\,(\hat T-a)_+$, the proof shows that $X$
is dominated by $\hat T$ in the increasing convex order
\cite[Section~4.A]{ShakedShanthikumar} as soon as $\E\,X\le\E\,\hat T$.  This is
why the single condition \eqref{eq:tail} controls all the moments.  The
condition is invariant under scaling of $T$.  It compares the mean excess of
$T$ over its mean with the variance of $T$.
\end{remark}

\begin{lemma}\label{lem:decreasing}
If $W$ is non-increasing on $(0,\infty)$, then
\[
    s\,\E\,(T-s)_+\le\tfrac34\Var T,
\]
with equality only if $T$ is uniformly distributed on $[0,2s]$.
\end{lemma}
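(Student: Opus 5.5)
The plan is to use the mixture-of-uniforms representation of non-increasing densities, which turns both sides of the inequality into expectations against a single mixing measure; a tangent-line argument then finishes. A non-increasing density $W$ on $(0,\infty)$ is a mixture of uniform densities, by Khintchine's theorem. Concretely, $T\overset{d}{=}UZ$ with $U$ uniform on $(0,1)$ and $Z\ge0$ independent of $U$; one can take the law of $Z$ to be $\mu(dz)=z\,d(-W)(z)$, possibly after adjusting a null set.

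In terms of $Z$ the quantities become
\[
s=\tfrac12\E Z,\qquad \E T^2=\tfrac13\E Z^2,\qquad \E(T-s)_+=\E\Bigl[\frac{(Z-s)_+^2}{2Z}\Bigr].
\]
The last identity holds because, conditionally on $Z=z$, $\E(zU-s)_+=(z-s)_+^2/(2z)$. We also rewrite $s^2=s\cdot\tfrac12\E Z$, so that the right-hand side $\tfrac34\Var T$ becomes linear in the law of $Z$ once $s$ is held fixed. By scaling (the inequality is homogeneous of degree $2$) we may take $s=1$. The claim then reads $\E\,\psi(Z)\ge0$ under the constraint $\E Z=2$, where
\[
\psi(z)=\frac{z^2}{4}-\frac{3z}{8}-\frac{(z-1)_+^2}{2z}.
\]

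Since the constraint is a single linear equality, it suffices to find a constant $c$ with $\psi(z)\ge c\,(z-2)$ for all $z>0$. Taking expectations then gives $\E\psi(Z)\ge c(\E Z-2)=0$. One checks that $\psi(2)=0$, so the natural choice is the tangent slope $c=\psi'(2)=\tfrac14$.

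It remains to verify that $h(z)=\psi(z)-\tfrac14(z-2)$ is nonnegative, splitting into two ranges.
\begin{itemize}
\item For $0<z\le1$, $h(z)=\tfrac{z^2}{4}-\tfrac{5z}{8}+\tfrac12$. This quadratic has negative discriminant, so $h>0$ there.
\item For $z>1$, a short computation gives $8z\,h(z)=2z^3-9z^2+12z-4=(z-2)^2(2z-1)$, which is $\ge0$ for $z\ge\tfrac12$.
\end{itemize}
Hence $h\ge0$ on $(0,\infty)$, and $h$ vanishes only at $z=2$.

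For the equality statement: equality forces $\E h(Z)=0$, so $Z=2s$ almost surely. Then $T=UZ$ is uniform on $[0,2s]$.

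The main obstacle is the first step: justifying the mixture representation cleanly for a general measurable non-increasing $W$ with finite moments. This requires a right-continuous version of $W$, and checking that $W(t)\to0$ as $t\to\infty$ (which follows from $m_0<\infty$) so that $\mu$ is a probability measure. Alternatively, one could avoid Khintchine by integrating by parts directly against $-dW$, which amounts to the same computation. After that, the algebra is just the factorization displayed above.
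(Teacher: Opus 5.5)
Your proposal is correct and is essentially the paper's proof. It uses the same representation $T=UB$ with $d\lambda(b)=b\,d(-W)(b)$ (justified via a right-continuous version of $W$ and $W(t)\to0$), the same three identities for $s$, $\E\,T^2$ and $\E\,(T-s)_+$, and the same pointwise supporting-line inequality with the factorization $(b-2)^2(2b-1)$. Your tangent line of slope $\tfrac14$ at $z=2$ is exactly the paper's inequality \eqref{eq:pointwise-b} multiplied by $\tfrac34$ and rearranged, and you handle the equality case the same way.
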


\begin{proof}
Changing $W$ on a null set, we may assume that $W$ is right-continuous.  Since
$W$ is integrable and non-increasing, $W(t)\to0$ as $t\to\infty$.  Let
$\lambda$ be the measure on $(0,\infty)$ given by $d\lambda(b)=b\,d(-W)(b)$.
Then $W(t)=\int_{(t,\infty)}b^{-1}\,d\lambda(b)$ for $t>0$, and Tonelli's
theorem gives, for every measurable $g\ge0$,
\begin{equation}\label{eq:mixture}
    \E\,g(T)=\int_0^\infty g(t)W(t)\,dt
    =\int_{(0,\infty)}\Bigl(\frac1b\int_0^bg(t)\,dt\Bigr)\,d\lambda(b).
\end{equation}
With $g\equiv1$ this shows that $\lambda$ is a probability measure; let $B$
have law $\lambda$.  (Thus $T$ has the law of $UB$, where $U$ is uniform on
$[0,1]$ and independent of $B$.)  By \eqref{eq:mixture},
\[
    s=\tfrac12\E\,B,\qquad \E\,T^2=\tfrac13\E\,B^2,\qquad
    \E\,(T-s)_+=\E\,\frac{(B-s)_+^2}{2B}.
\]
We claim that for all $b>0$,
\begin{equation}\label{eq:pointwise-b}
    \frac{b^2}3-\frac{2s}3\cdot\frac{(b-s)_+^2}{b}
    \ \ge\ s^2+\frac{5s}6\,(b-2s),
\end{equation}
with equality only for $b=2s$.  Both sides are homogeneous of degree $2$ in
$(b,s)$, so we may take $s=1$.  For $0<b\le1$ the difference of the two sides
is $(2b^2-5b+4)/6>0$.  For $b>1$ the difference, multiplied by $6b$, equals
$2b^3-9b^2+12b-4=(b-2)^2(2b-1)$, which is positive except at $b=2$.  Taking
expectations in \eqref{eq:pointwise-b} and using $\E\,B=2s$, we get
$\E\,T^2-\frac43\,s\,\E\,(T-s)_+\ge s^2$, which is the assertion.  Equality
forces $B=2s$ almost surely, that is, $T$ is uniform on $[0,2s]$.
\end{proof}

\begin{proof}[Proof of Theorem~\ref{thm:decreasing}]
By Lemma~\ref{lem:decreasing}, condition \eqref{eq:tail} holds, and
Proposition~\ref{prop:tail} applies.
\end{proof}

\begin{remark}\label{rem:weele}
Wee and Le~\cite{WeeLe2023Fock} obtained upper bounds for the Korenblum
constants of several families of weighted Fock spaces, all from the pair
$f\equiv c$, $g(z)=z$.  For $p=2$ the norms of these spaces are, up to a
constant factor, of the form $\norm{\cdot}_W$ with
\[
    W(\abs{z}^2)=e^{-\alpha\abs{z}^m}\quad\text{and}\quad
    W(\abs{z}^2)=\sum_{n\ge1}\abs{d_n}\,e^{-\alpha\lambda_n\abs{z}^m},
\]
where $\alpha,m>0$, $0<\lambda_n\uparrow\infty$, $d_n\ne0$ and
\begin{equation}\label{eq:weele-summable}
    \sum_{n\ge1}\abs{d_n}\lambda_n^{-2/m}<\infty
\end{equation}
(\cite[Theorems~2.2, 3.2 and~3.6, Definition~3.11 and condition~(20)]{WeeLe2023Fock};
the infinite intersections there carry the norm
$\norm{f}^2=\sum_n\norm{f}_n^2$).  Condition~\eqref{eq:weele-summable} says
that $m_0<\infty$, and since $\lambda_n\ge\lambda_1$ it implies that all
moments are finite.  These weights are non-increasing, so by
Theorem~\ref{thm:decreasing} their upper bounds for $p=2$ are the exact
Korenblum radii, and these are attained.  Explicitly, since
$\int_0^\infty t^ke^{-ct^{m/2}}\,dt=\frac2m\,c^{-(2k+2)/m}\,\Gamma\bigl(\frac{2k+2}m\bigr)$,
the mixture weight has
\[
    \mathcal{K}_W^2=\frac{m_1}{m_0}
    =\alpha^{-2/m}\,\frac{\Gamma(4/m)}{\Gamma(2/m)}\,
    \frac{\sum_n\abs{d_n}\lambda_n^{-4/m}}{\sum_n\abs{d_n}\lambda_n^{-2/m}} .
\]
This answers \cite[Questions~(1) and~(2) in Section~5.2]{WeeLe2023Fock} for
$p=2$.  The ratio of the squared Korenblum radii of the infinite intersection
$F_{\{\lambda_n,d_n\}}$ and of $F_{\lambda_1,d_1}$ is the average of the
numbers $(\lambda_1/\lambda_n)^{2/m}\le1$ with the positive weights
$\abs{d_n}\lambda_n^{-2/m}$.  Since $\lambda_n\to\infty$, it is strictly less
than~$1$, so the first radius is strictly smaller.  For $\lambda_n=n^m$ and
$d_n=1$ the ratio of the sums is $\zeta(4)/\zeta(2)=\pi^2/15$, and therefore
\[
    \mathcal{K}_{F_{\{m,n^m,1\}}}=\frac{\pi}{\sqrt{15}}\,\mathcal{K}_{F_m}
\]
for all $\alpha,m>0$.
\end{remark}

\begin{proof}[Proof of Corollary~\ref{cor:gamma}]
By scaling we may assume $\alpha=1$.  After normalization,
$W(t)=t^\beta e^{-t}/\Gamma(\beta+1)$, so $T$ has the Gamma distribution
with shape $\kappa=\beta+1$, and $s=\Var T=\kappa$.  Write
$\Gamma(\kappa,x)=\int_x^\infty t^{\kappa-1}e^{-t}\,dt$.  Using
$\Gamma(\kappa+1,x)=\kappa\,\Gamma(\kappa,x)+x^\kappa e^{-x}$, one finds
\[
    \E\,(T-\kappa)_+
    =\frac{\Gamma(\kappa+1,\kappa)-\kappa\,\Gamma(\kappa,\kappa)}{\Gamma(\kappa)}
    =\frac{\kappa^\kappa e^{-\kappa}}{\Gamma(\kappa)} .
\]
Hence \eqref{eq:tail} is equivalent to $h(\kappa)\ge0$, where
$h(\kappa)=\log\Gamma(\kappa)-\kappa\log\kappa+\kappa$.  By Stirling's formula
with Binet's remainder \cite[Chapter~XII]{WhittakerWatson},
\begin{equation}\label{eq:stirling}
    h(\kappa)=\frac12\log\frac{2\pi}\kappa+\mu(\kappa),
    \qquad
    \mu(\kappa)=\int_0^\infty\Bigl(\frac12-\frac1t+\frac1{e^t-1}\Bigr)
    \frac{e^{-\kappa t}}t\,dt,
\end{equation}
and the integrand is positive.  Thus $\mu$ is positive and decreasing, and
$\mu(\kappa)\to0$ as $\kappa\to\infty$.  It follows that $h$ is strictly
decreasing, with $h(\kappa)\to+\infty$ as $\kappa\to0$ and
$h(\kappa)\to-\infty$ as $\kappa\to\infty$.  Hence $h$ has a unique zero
$\kappa_1$, and $h\ge0$ exactly on $(0,\kappa_1]$.  Since
$h(2\pi)=\mu(2\pi)>0$, we have $\kappa_1>2\pi$.  Now
Proposition~\ref{prop:tail} gives the result.
\end{proof}

\begin{remark}\label{rem:kappa1}
Numerically $\kappa_1=6.4475869\ldots$.  By \eqref{eq:stirling}, $\kappa_1$ is
the solution of $\kappa=2\pi e^{2\mu(\kappa)}$, so it exceeds $2\pi$ only by
the Stirling correction.  The number $2\pi$ has a simple meaning: for large
$\kappa$ the Gamma distribution is close to a normal distribution with mean
and variance $\kappa$, for which $\E\,(T-\kappa)_+\approx\sqrt{\kappa/(2\pi)}$,
while \eqref{eq:tail} asks for $\E\,(T-\kappa)_+\le1$.  For $\kappa>\kappa_1$
the law of $\max(T,s)$ violates \eqref{eq:moment-conditions} with $k=2$, so
$\kappa_1$ is the exact limit of this measure.  What happens for larger
$\kappa$ is discussed in Section~\ref{sec:numerics}.
\end{remark}

\begin{remark}
For a non-negative integer $\beta=m$, the space $F^2_{\alpha,m}$ is a
Fock--Sobolev space.  By a theorem of Cho and Zhu \cite{ChoZhu2012} (in one
variable, and after rescaling to our normalization), an entire function $f$
satisfies $f^{(m)}\in F^2_\alpha$ if and only if $z^mf\in F^2_\alpha$, that is,
if and only if $f\in F^2_{\alpha,m}$, with equivalent norms.  The Korenblum
radius depends on the norm and not only on the space, so
Corollary~\ref{cor:gamma} concerns the norm
$\bigl(\int\abs{f}^2\abs{z}^{2m}e^{-\alpha\abs{z}^2}\,dA\bigr)^{1/2}$ and not
the Fock--Sobolev norm.
\end{remark}

\section{Further remarks and open problems}\label{sec:remarks}

\subsection{Limits of the criterion}\label{sec:numerics}
In this subsection $m_0=1$ and $s=m_1$.  Theorem~\ref{thm:criterion} is only a
sufficient condition, so a weight for which no suitable measure exists may still
have Korenblum radius $\sqrt s$.  A natural place to look for counterexamples
is among pairs with $\abs f=\abs g$ on the circle $\abs z=\sqrt s$.  The
extremal pairs of Theorem~\ref{thm:main} are of this kind, and so are the
exterior analogues of Wang's pairs~\cite{Wang2008},
\[
    f(z)=R\,(R^N+az^N)\,Q(z^N),\qquad g(z)=z\,(aR^N+z^N)\,Q(z^N),
\]
with $0<a<1$ and $Q$ entire, for which $f(z)=B(R/z)\,g(z)$ with the Blaschke
product $B(u)=u(u^N+a)/(1+au^N)$.  The next proposition shows that such pairs
never violate the principle at the radius $\sqrt s$, for any weight.

\begin{proposition}\label{prop:blaschke}
Let $W$ be a radial weight with $m_0=1$, let $0<\rho\le\sqrt s$, and let $B$ be
a finite Blaschke product of degree $n\ge1$.  If $f,g\in F^2_W$, $g\not\equiv0$,
and $f(z)=B(\rho/z)\,g(z)$, then $\abs{f}\le\abs{g}$ for $\abs z\ge\rho$ and
$\norm f_W\le\norm g_W$.  Equality holds if and only if $\rho=\sqrt s$, $n=1$
and $g$ is a linear polynomial.
\end{proposition}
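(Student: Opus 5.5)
The inequality $\abs f\le\abs g$ on $\abs z\ge\rho$ is immediate, since $\abs{\rho/z}\le1$ there and $\abs{B}\le1$ on $\overline{\D}$. For the norm inequality I would factor $B$ into Möbius factors and prove the inequality one factor at a time, using Lemma~\ref{lem:identity} exactly as in the equality part of the proof of Theorem~\ref{thm:main}.

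\emph{Setting up the chain.} Write $B=\prod_{j=1}^n\varphi_j$ with $\varphi_j(u)=\eta_j(u-b_j)/(1-\bar b_ju)$, where $\abs{\eta_j}=1$ and $b_j\in\D$. Then
\[
    \varphi_j(\rho/z)=\eta_j\,\frac{\rho-b_jz}{z-\bar b_j\rho},
\]
so $B(\rho/z)$ has poles exactly at the points $\bar b_j\rho$ (which equals $0$ when $b_j=0$), counted with multiplicity. The numerators $\rho-b_jz$ vanish only at $\rho/b_j$, which lies outside $\abs z\le\rho$, so they never cancel these poles. Since $f=B(\rho/z)g$ is entire, $g$ must vanish at $\bar b_1\rho,\dots,\bar b_n\rho$ with multiplicities; in particular $g$ has at least $n$ zeros.

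Put $g_0=g$ and $g_j=\bigl(\prod_{i\le j}\varphi_i(\rho/z)\bigr)g$. Each $g_j$ is entire, because the partial product only has a sub-multiset of the poles. Each $g_j$ also lies in $F^2_W$: on $\abs z\ge\rho$ we have $\abs{g_j}\le\abs g$, and $g_j$ is bounded on the compact disk $\abs z\le\rho$. Finally $g_n=f$, so it suffices to show $\norm{g_j}_W\le\norm{g_{j-1}}_W$ for each $j$.

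\emph{One factor.} Fix $j$ and drop the index: set $b=b_j$ and $\eta=\eta_j$. Then
\[
    g_{j-1}=(z-\bar b\rho)h,\qquad g_j=\eta(\rho-bz)h,
\]
with $h$ entire, and $h\in F^2_W$ by the same argument as in the proof of Theorem~\ref{thm:main}. Apply Lemma~\ref{lem:identity} in $F^2_W$ with $\lambda=-\bar b$, $A=(\rho-bz)h$ and $B=(z-\bar b\rho)h$. One computes
\[
    B+\bar bA=(1-\abs b^2)\,zh,\qquad A+bB=(1-\abs b^2)\,\rho h,
\]
and therefore
\[
    \norm{g_{j-1}}_W^2-\norm{g_j}_W^2=(1-\abs b^2)\bigl(\norm{zh}_W^2-\rho^2\norm h_W^2\bigr)
    =(1-\abs b^2)\sum_{k\ge0}\abs{h_k}^2\bigl(m_{k+1}-\rho^2m_k\bigr).
\]
This is the key step. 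By Cauchy--Schwarz, $m_k^2\le m_{k-1}m_{k+1}$, so the ratios $m_{k+1}/m_k$ are non-decreasing in $k$. Hence $m_{k+1}/m_k\ge m_1/m_0=s\ge\rho^2$, every term of the sum is non-negative, and the inequality follows.

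\emph{Equality.} Since $W$ is a function and not a point mass, the Cauchy--Schwarz inequality is strict, so $m_{k+1}/m_k>s$ for $k\ge1$. Equality in a given step therefore forces $h_k=0$ for all $k\ge1$, together with $h_0=0$ or $\rho^2=s$. Now suppose $\norm f_W=\norm g_W$. Then every step is an equality, and $h\not\equiv0$ in the first step because $g\not\equiv0$. So $\rho^2=s$ and $g=g_0=(z-\bar b_1\rho)c$ is a linear polynomial. But $g$ has at least $n$ zeros, which forces $n=1$.

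Conversely, let $n=1$, $\rho=\sqrt s$ and $g$ linear. Then $g$ vanishes at $\bar b_1\rho$, so $h$ is constant, and the displayed identity gives equality.

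I expect the main point needing care to be the bookkeeping of poles and zeros with multiplicity, including the factors with $b_j=0$, so that every $g_j$ and every $h$ is entire and lies in $F^2_W$.
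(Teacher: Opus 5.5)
Your proof is correct. Its skeleton is the paper's: split $B$ into Möbius factors, trade $z-\bar b\rho$ for $\rho-bz$ one factor at a time, and reduce each trade to the sign of $\sum_k\abs{h_k}^2(m_{k+1}-\rho^2m_k)$.

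The difference lies in where the telescoping happens and how positivity is obtained. The paper works with circle means. It proves $M_2(r,(z-\bar b\rho)\varphi)^2-M_2(r,(\rho-bz)\varphi)^2=(1-\abs{b}^2)(r^2-\rho^2)M_2(r,\varphi)^2$ and sums over the factors to get $M_2(\sqrt t,g)^2-M_2(\sqrt t,f)^2=(t-\rho^2)K(t)$ for a single non-decreasing power series $K$. It then concludes with the Chebyshev-type inequality $\int(t-s)(K(t)-K(s))W\,dt\ge0$ together with $(s-\rho^2)\int KW\,dt\ge0$. In the equality case $K$ is constant on a set of positive measure, hence constant. So every intermediate factor is constant, which is impossible for $n\ge2$ because the second intermediate function is divisible by $z-\bar b_1\rho$. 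Since $M_2$ makes sense for any entire function, the paper never needs the intermediate products to lie in $F^2_W$; it only checks $\int(1+t)KW\,dt<\infty$.

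You instead stay inside the Hilbert space and apply Lemma~\ref{lem:identity} at each step. This costs you the check that every $g_j$ belongs to $F^2_W$. Membership of $h$ and $zh$ is then automatic from $A+bB=(1-\abs{b}^2)\rho h$ and $B+\bar bA=(1-\abs{b}^2)zh$. In return you get a coefficientwise positivity argument via log-convexity of the moments, $m_{k+1}/m_k\ge m_1/m_0=s$. You also get a shorter equality case: strict Cauchy--Schwarz forces $h$ to be constant already in the first step, and counting the zeros of the resulting linear $g$ gives $n=1$, with no appeal to analyticity of $K$.

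The two positivity arguments establish the same inequality $m_{k+1}\ge s\,m_k$. The paper's covariance argument applied to $K(t)=t^k$ gives exactly it.
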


\begin{proof}
Write $B(u)=\eta\prod_{j=1}^n(u-\alpha_j)/(1-\bar\alpha_ju)$ with $\abs\eta=1$,
$\alpha_j\in\D$, and put $p_j(z)=\rho-\alpha_jz$, $q_j(z)=z-\bar\alpha_j\rho$,
so that $B(\rho/z)=\eta\prod_jp_j(z)/q_j(z)$.  The first claim holds because
$\abs{\rho/z}\le1$.  Since $p_i(\bar\alpha_j\rho)=\rho(1-\alpha_i\bar\alpha_j)\ne0$,
the function $g$ vanishes at the zeros of $q_1\cdots q_n$ with at least the
same multiplicities, so $g=q_1\cdots q_n\,h$ and $f=\eta\,p_1\cdots p_n\,h$
with $h$ entire.

For an entire function $\varphi=\sum c_kz^k$ and $\alpha\in\D$, the coefficients
of $(z-\bar\alpha\rho)\varphi$ and $(\rho-\alpha z)\varphi$ are
$c_{k-1}-\bar\alpha\rho c_k$ and $\rho c_k-\alpha c_{k-1}$.  The cross terms
of their squared moduli coincide, as in Lemma~\ref{lem:identity}, and we get
\[
    M_2\bigl(r,(z-\bar\alpha\rho)\varphi\bigr)^2-M_2\bigl(r,(\rho-\alpha z)\varphi\bigr)^2
    =(1-\abs\alpha^2)\,(r^2-\rho^2)\,M_2(r,\varphi)^2 .
\]
Replacing the factors $p_m$ by $q_m$ one at a time and applying this identity
with $\varphi=h_m=q_1\cdots q_{m-1}\,p_{m+1}\cdots p_n\,h$, we obtain
\[
    M_2(\sqrt t,g)^2-M_2(\sqrt t,f)^2=(t-\rho^2)\,K(t),
\]
where
\[
    K(t)=\sum_{m=1}^n(1-\abs{\alpha_m}^2)\,M_2(\sqrt t,h_m)^2 .
\]
Here $K$ is a non-zero power series with non-negative coefficients, so it is
positive and non-decreasing on $(0,\infty)$.  Moreover, $\int_0^\infty
(1+t)K(t)W(t)\,dt<\infty$.  Indeed, $(t-\rho^2)K(t)$ is integrable against $W$
by~\eqref{eq:norm}, since $f,g\in F^2_W$.  Also $t-\rho^2\ge t/2$ for
$t\ge2\rho^2$, and $K$ is bounded on $[0,2\rho^2]$.  By~\eqref{eq:norm} and
$\int(t-s)W(t)\,dt=0$,
\[
    \begin{split}
    \norm g_W^2-\norm f_W^2
    &=\int_0^\infty(t-s)\bigl(K(t)-K(s)\bigr)W(t)\,dt\\
    &\qquad+(s-\rho^2)\int_0^\infty K(t)W(t)\,dt\ \ge\ 0 .
    \end{split}
\]
If equality holds, then $\rho=\sqrt s$, and $K(t)=K(s)$ on a set of positive
measure, so $K$ is constant and every $h_m$ is constant.  For $n\ge2$ this is
impossible, since $q_1$ divides $h_2$.  Hence $n=1$ and $h$ is constant, so
$g=q_1h$ is linear.  Conversely, if $\rho=\sqrt s$, $n=1$ and $g$ is linear,
then $h$ is constant, so $K$ is constant and both terms vanish.
\end{proof}

For $W(t)=e^{-t}$ the equality case recovers the extremal
pairs~\eqref{eq:extremal}.

Proposition~\ref{prop:blaschke} covers every pair that is tight on the whole
circle.  Let $f,g\in F^2_W$, $g\not\equiv0$, satisfy $\abs f\le\abs g$ for
$\abs z>\rho$ and $\abs f=\abs g$ on $\abs z=\rho$.  As in the proof of
Lemma~\ref{lem:circle}, $\omega=f/g$ is holomorphic on
$\{\abs z>\rho\}\cup\{\infty\}$ with $\abs\omega\le1$.  It has no poles on the
circle $\abs z=\rho$ either, since a pole there would make $\omega$ unbounded
in the exterior.  Hence $\psi(u)=\omega(\rho/u)$ is holomorphic in a
neighbourhood of $\overline\D$, and $\abs\psi=1$ on $\partial\D$.  Such a
function is a unimodular constant or a finite Blaschke product.  To see this,
let $B$ be the Blaschke product formed from the finitely many zeros of $\psi$
in $\D$, and apply the maximum principle to $\psi/B$ and $B/\psi$; this shows
that $\psi/B$ is a unimodular constant.  If $\psi$ is constant, then
$\norm f_W=\norm g_W$.  Otherwise $\psi$ is a finite Blaschke product of degree
$n\ge1$, $f(z)=\psi(\rho/z)\,g(z)$, and Proposition~\ref{prop:blaschke}
applies.
Consequently, a pair violating the principle at the radius $\sqrt s$ must
satisfy $\abs f<\abs g$ on part of the circle $\abs z=\sqrt s$.

For some weights, however, the criterion does fail at $R^2=s$: this happens,
for example, for weights concentrated near a circle.

\begin{proposition}\label{prop:thin}
Let $0<d\le41/100$ and $W=(2d)^{-1}\mathbf 1_{[1-d,1+d]}$, so that $m_0=m_1=1$.
No Borel probability measure $\nu$ on $[1,\infty)$
satisfies~\eqref{eq:moment-conditions} with $R=1$.
\end{proposition}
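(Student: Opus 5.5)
We argue by contradiction via linear-programming duality: a finite subset of the conditions~\eqref{eq:moment-conditions} is already infeasible, and a polynomial certificate on $[1,\infty)$ exhibits this. First we compute the moments of the thin weight explicitly:
\[
m_k=\frac{(1+d)^{k+1}-(1-d)^{k+1}}{2d(k+1)},
\]
so $m_0=m_1=1$, $m_2=1+d^2/3$, $m_3=1+d^2$ and $m_4=1+2d^2+d^4/5$. It is convenient to substitute $t=1+u$ with $u\ge0$ and to write $\mu_j=\int u^j\,d\nu$. For the first few values of $k$ the conditions~\eqref{eq:moment-conditions} become linear inequalities in $\mu_1,\mu_2,\mu_3,\dots$. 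For instance, $k=2$ gives $2\mu_1+\mu_2\ge d^2/3$ and $\mu_1\le d^2/3$. Next, $k=3$ gives $2\mu_1+\mu_2\le d^2$ and $3\mu_1+3\mu_2+\mu_3\ge d^2$. Finally, $k=4$ gives $3\mu_1+3\mu_2+\mu_3\le 2d^2+d^4/5$ together with a lower bound on $\mu_4$.

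The heuristic reason these are incompatible is the following. The upper conditions force the low moments of $u$ to be of size $O(d^2)$, with $\mu_1\le d^2/3$. The lower conditions force $\nu$ to put enough mass at distance comparable to $d$ from $1$. A positive measure on $[0,\infty)$ cannot have small first moment and simultaneously large higher moments in the prescribed ratios. This is the truncated Stieltjes moment problem, and the relevant inequalities are $\mu_2^2\le\mu_1\mu_3$ (Cauchy--Schwarz for $u^{1/2}$ and $u^{3/2}$), $\mu_1^2\le\mu_2$, and more generally positive semidefiniteness of the Hankel matrices $(\mu_{i+j})$ and $(\mu_{i+j+1})$.

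Concretely, we would seek nonnegative multipliers $c_k,e_k$ for a finite set of indices $k\le K$. Set
\[
Q(t)=\sum_k e_k\,t^{k-1}-\sum_k c_k\,t^k,\qquad C=\sum_k(e_k-c_k)m_k .
\]
Feasibility of $\nu$ implies $\int Q\,d\nu\le C$. The contradiction therefore follows once we show $Q(t)>C$ for all $t\ge1$, because $\nu$ is a probability measure on $[1,\infty)$. Equivalently, we look for a polynomial $Q$ in the cone generated by $t^{k-1}$ and $-t^k$ with $Q-C>0$ on $[1,\infty)$. Writing $Q(1+u)-C$ as $\sigma_0(u)+u\,\sigma_1(u)$ with $\sigma_0,\sigma_1$ sums of squares, plus a positive constant, gives an algebraic proof of positivity. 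We would first try $K=3$ or $K=4$ with multipliers depending polynomially on $d$. A natural candidate is a quadratic-in-$u$ certificate of the form $\gamma(u-\theta d)^2+\delta u$ matched against the $k=2,3$ conditions, with $\theta$ chosen near the point $u\approx d$ where the extremal measures want to concentrate.

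The main obstacle is making one certificate work uniformly for all $0<d\le 41/100$. The cutoff $41/100$ signals that the certificate degrades as $d$ grows, and that some coefficient inequality (a discriminant or a leading coefficient) holds only up to roughly that value. After choosing the multipliers as explicit functions of $d$, the remaining step is to verify that a few polynomial inequalities in $d$ hold on $(0,41/100]$. We expect these to reduce to sign checks of low-degree polynomials in $d$. If small $K$ proves insufficient near $d=41/100$, we would increase $K$ and find the multipliers by linear programming at a few values of $d$, then interpolate them as rational functions of $d$ and verify the result exactly.
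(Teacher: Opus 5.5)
There is a genuine gap. The low-order truncations you plan to certify are feasible. So the certificates you look for (with $K=3$ or $4$, or the quadratic $\gamma(u-\theta d)^2+\delta u$ matched to $k=2,3$) do not exist, and your heuristic for why the system is inconsistent is wrong.

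Take $\nu=(1-p)\delta_1+p\,\delta_{3/2}$ with $p=2d^2/3$. Then $\mu_j=\frac{d^2}{3}2^{1-j}$ and $\int t^k\,d\nu-1=p\bigl((3/2)^k-1\bigr)$. For $k=1,\dots,7$ this equals $d^2$ times $\frac13,\frac56,\frac{19}{12},\frac{65}{24},\frac{211}{48},\frac{665}{96},\frac{2059}{192}$. For $k=2,\dots,7$ the values $m_k-1$ are $\frac{d^2}{3}$, $d^2$, $2d^2+\frac{d^4}{5}$, $\frac{10}{3}d^2+d^4$, $5d^2+3d^4+\frac{d^6}{7}$, $7d^2+7d^4+d^6$. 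Comparing the two lists, every condition in \eqref{eq:moment-conditions} with $k\le7$ holds for all $0<d\le41/100$; the upper one at $k=2$ holds with equality. By weak duality, no combination $Q$ of the functions $t^{k-1}$ and $-t^k$ with $k\le7$ can satisfy $Q>C$ on $[1,\infty)$. This already excludes your $K=3,4$ attempts.

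The flaw in the heuristic is this. A mass of order $d^2$ placed at distance of order $1$ from $t=1$, rather than of order $d$, keeps $\mu_1,\mu_2,\dots$ of order $d^2$ while supplying all the growth the lower conditions demand. No Stieltjes or Hankel inequality among finitely many low moments rules this out.

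What is missing is the use of the upper conditions for arbitrarily large $k$. Since $\int t^{k-1}\,d\nu\le m_k\le(1+d)^k$ for every $k$, $\nu$ cannot charge $(1+d,\infty)$. This is the only thing that kills the atom at $3/2$, and it is the first step of the paper's proof. Once $\operatorname{supp}\nu\subset[1,1+d]$, no duality search is needed. The chord bound $t^k\le1+(t-1)\bigl((1+d)^k-1\bigr)/d$ on $[1,1+d]$, the upper condition $\int(t-1)\,d\nu\le d^2/3$ at $k=2$, and the lower condition at $k=7$ together give $m_7\le1+\frac d3\bigl((1+d)^7-1\bigr)$. This contradicts $m_7-1-\frac d3\bigl((1+d)^7-1\bigr)=\frac{d^2}{3}P(d)>0$. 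The threshold $41/100$ is simply where this particular $k=7$ chord estimate stops working; it is not a degradation of an LP certificate.

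Any finite certificate of your type must include some upper condition with $k\ge8$, because the top-degree coefficient of $Q$ has to be positive. It would in effect have to encode the support restriction. Computing LP multipliers at sample values of $d$ and interpolating them gives no proof that such a certificate exists uniformly on $(0,41/100]$.
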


\begin{proof}
Suppose that $\nu$ does.  Since $\int t^{k-1}\,d\nu\le m_k\le(1+d)^k$ for all
$k$, the measure $\nu$ is supported on $[1,1+d]$.  The case $k=2$ gives
$\int t\,d\nu\le m_2=1+d^2/3$.  By convexity,
$t^k\le1+(t-1)\bigl((1+d)^k-1\bigr)/d$ on $[1,1+d]$, and hence
\[
    m_k\le\int t^k\,d\nu\le1+\tfrac d3\bigl((1+d)^k-1\bigr).
\]
Take $k=7$.  Then
$m_7=\bigl((1+d)^8-(1-d)^8\bigr)/(16d)=1+7d^2+7d^4+d^6$, and expanding gives
\[
    m_7-1-\tfrac d3\bigl((1+d)^7-1\bigr)=\tfrac{d^2}3\,P(d),
\]
where $P(d)=14-21d-14d^2-35d^3-18d^4-7d^5-d^6$.
The polynomial $P$ is decreasing on $(0,\infty)$, and
$P(41/100)=29878575059\cdot10^{-12}>0$.  Hence
$m_7>1+\frac d3\bigl((1+d)^7-1\bigr)$, a contradiction.
\end{proof}

For this weight, $\E\,(T-1)_+=d/4$ and $\Var T=d^2/3$, so
Proposition~\ref{prop:tail} applies exactly when $d \ge3 /4$.  As $d\to0$, the
weight tends to the point mass at $t=1$.  In that limit, the radius $1$ is
trivially admissible and $\nu=\delta_1$ satisfies~\eqref{eq:moment-conditions}.
So the criterion works for $d\ge3/4$ and in the limit $d=0$, but fails for
$0<d\le0.41$.  This suggests that the failure is a weakness of the method: the
circle estimate~\eqref{eq:circle} is averaged against a single measure, and for
such weights this loses too much.  We do not know whether $\mathcal{K}_W<1$ for any
$0<d<3/4$.  By Proposition~\ref{prop:blaschke}, a counterexample cannot be tight
on the unit circle.

For the Gamma weights of Corollary~\ref{cor:gamma} with $\kappa>\kappa_1$, the
range of $\kappa$ for which a suitable measure exists is unknown.  By scaling
we take $\alpha=1$, so that $m_k=\kappa(\kappa+1)\cdots(\kappa+k-1)$ and
$s=\kappa$.  Non-existence can be proved by duality.  If $\sigma_k,\tau_k\ge0$
for $1\le k\le K$ and
\begin{equation}\label{eq:dual-cert}
    \sum_{k=1}^K \sigma_k\frac{t^k}{m_k}-\sum_{k=1}^K \tau_k\frac{s\,t^{k-1}}{m_k}
    \ <\ \sum_{k=1}^K \sigma_k-\sum_{k=1}^K \tau_k
    \qquad\text{for all $t\ge s$},
\end{equation}
then no suitable $\nu$ exists: multiplying the conditions
\eqref{eq:moment-conditions} by $\sigma_k/m_k$ and $\tau_k/m_k$, subtracting
and summing gives the reverse of~\eqref{eq:dual-cert} integrated against $\nu$.

\begin{proposition}\label{prop:gamma50}
Let $W(t)=t^{49}e^{-t}/\Gamma(50)$, the weight of
Corollary~\ref{cor:gamma} with $\alpha=1$ and $\beta=49$.  No Borel
probability measure $\nu$ on $[50,\infty)$ satisfies~\eqref{eq:moment-conditions}
with $R^2=50$.
\end{proposition}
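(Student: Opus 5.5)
We will prove Proposition~\ref{prop:gamma50} by exhibiting an explicit dual certificate as in~\eqref{eq:dual-cert}. Here $s=\kappa=50$ and $m_k=50\cdot51\cdots(49+k)$. We choose a truncation order $K$ (I expect something like $K$ between $10$ and $30$ suffices) and non-negative rationals $\sigma_k,\tau_k$, $1\le k\le K$, such that
\[
    \Phi(t)=\sum_{k=1}^K\sigma_k\frac{t^k}{m_k}-\sum_{k=1}^K\tau_k\frac{s\,t^{k-1}}{m_k}-\sum_{k=1}^K\sigma_k+\sum_{k=1}^K\tau_k
\]
is strictly negative on $[50,\infty)$. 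Once this holds, the argument stated before the proposition finishes the proof. Suppose $\nu$ existed. Integrate $\Phi$ against $\nu$, using $\int t^k\,d\nu\ge m_k$ (weighted by $\sigma_k\ge0$) and $s\int t^{k-1}\,d\nu\le m_k$ (weighted by $-\tau_k\le0$). This gives $\int\Phi\,d\nu\ge0$, which contradicts $\Phi<0$ on the support of $\nu$. Note that the leading coefficient must make $\Phi\to-\infty$ or stay bounded as $t\to\infty$. Since all $\sigma_k$ terms are positive powers with positive coefficients, we need the top-degree coefficient $\sigma_K/m_K$ to vanish. In practice we take $\sigma_K=0$ and let $\tau_K>0$ control the top degree $t^{K-1}$ with a negative sign.

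The certificate itself is found by linear programming. We maximize a margin $\varepsilon$ subject to $\Phi(t_j)\le-\varepsilon$ on a fine grid $t_j\in[50,T_{\max}]$, normalizing, e.g., $\sum\sigma_k+\sum\tau_k=1$. We then round the solution to rationals with modest denominators. The $k=2$ condition alone is not violated: by Corollary~\ref{cor:gamma}, $\max(T,s)$ works up to $\kappa_1\approx6.45$, and here $\kappa=50$. So the certificate must genuinely use higher moments, and a cheap two-term certificate is not expected.

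Verification is the main obstacle. We must prove $\Phi(t)<0$ for all $t\ge50$ rigorously, in exact arithmetic, with finitely many sign checks. The plan is to substitute $t=50+u$ and expand $-\Phi(50+u)$ as a polynomial $\sum_j c_ju^j$ with exact rational coefficients. If all $c_j$ are positive, that alone finishes the proof. If some intermediate coefficients are negative, which is likely because the LP solution is tight near $t=50$ and near the point where the dual is active, we split $[50,\infty)$ into finitely many intervals $[t_i,t_{i+1}]$ and a final ray $[t_N,\infty)$. On each bounded interval we substitute $t=t_i+(t_{i+1}-t_i)v/(1+v)$, clear denominators, and check that all Bernstein-type (Pólya) coefficients of the resulting polynomial in $v\ge0$ are positive. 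On the ray we use the expansion in $u=t-t_N$, where the leading term $\tau_K s\,t^{K-1}/m_K$ dominates. All of these checks are finitely many signs of exact rationals, which is the "finitely many sign checks" the paper mentions. The delicate part is choosing the rounding and the interval subdivision so that every coefficient comes out with the right sign despite the tightness of the LP optimum. I would first try shrinking the certificate slightly toward the interior, using a margin $\varepsilon/2$, before rounding.
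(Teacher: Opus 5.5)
Your framework is the paper's: the duality remark~\eqref{eq:dual-cert}, multipliers found by linear programming, and an exact positivity check on $[50,\infty)$. That check uses a shift on an unbounded ray and the substitution $x\mapsto(a+bx)/(1+x)$ on finitely many bounded subintervals, and the logical deduction from a certificate to non-existence of $\nu$ is correct.

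\textbf{The gap.} You never produce the certificate, and for this proposition the certificate \emph{is} the proof. The statement is a specific non-existence claim at $\kappa=50$. Nothing in your text shows that suitable multipliers exist there for any finite $K$. This is not automatic: the paper's numerics put the feasible/infeasible transition of the truncated problems near $\kappa\approx43.6$, so the outcome at $\kappa=50$ cannot be taken for granted.

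The paper closes this step as follows:
\begin{itemize}
\item It takes $K=30$, $\sigma_7=8252$, $\sigma_{28}=260$, $\tau_2=90224$, $\tau_{19}=963$, $\tau_{30}=301$, so that $\sum\sigma_k-\sum\tau_k=-82976$.
\item It rescales by $u=t/50$ and $M_k=m_k/50^k$, obtaining the degree-$29$ polynomial $H(u)=-\Phi(50u)$, with minimum $H(1)\approx8.23$ on $[1,\infty)$.
\item It checks that $H(2+x)$ has only positive coefficients.
\item It checks the same for $(1+x)^{29}H\bigl((a+bx)/(1+x)\bigr)$ with $[a,b]=[1,\tfrac54]$, $[\tfrac54,\tfrac{11}8]$, $[\tfrac{11}8,\tfrac32]$ and $[\tfrac32,2]$.
\end{itemize}
Until you supply explicit rational multipliers and carry out these finitely many sign checks, you have a search procedure that could in principle come back empty, not a proof.

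\textbf{Two smaller points.}

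First, your reason why the $k=2$ condition alone cannot yield a contradiction is backwards. Corollary~\ref{cor:gamma} and Remark~\ref{rem:kappa1} say that for $\kappa=50>\kappa_1$ the law of $\max(T,s)$ \emph{violates} the $k=2$ condition, and that says nothing about other measures. The correct reason is that $\nu=\delta_{51}$ satisfies~\eqref{eq:moment-conditions} for $k=1,2$: indeed $51^2\ge50\cdot51=m_2$ and $50\cdot51\le m_2$. Any certificate must therefore involve larger $k$, consistent with the paper's use of $k$ up to $30$.

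Second, after setting $\sigma_K=0$, the coefficient of $t^{K-1}$ in $\Phi$ is $\sigma_{K-1}/m_{K-1}-s\tau_K/m_K$. You need this quantity to be negative, not merely $\tau_K>0$; the paper sidesteps this by taking $\sigma_{29}=0$.
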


\begin{proof}
Here $\kappa=s=50$.  Take $K=30$, $\sigma_7=8252$, $\sigma_{28}=260$,
$\tau_2=90224$, $\tau_{19}=963$, $\tau_{30}=301$, and all other multipliers
zero; then $\sum\sigma_k-\sum\tau_k=-82976$.  Put $u=t/50$ and
$M_k=m_k/50^k=\prod_{j=0}^{k-1}(1+j/50)$.  Then \eqref{eq:dual-cert} for
$t\ge50$ is equivalent to $H(u)>0$ for $u\ge1$, where
\[
    H(u)=-82976+\frac{90224\,u}{M_2}-\frac{8252\,u^7}{M_7}
    +\frac{963\,u^{18}}{M_{19}}-\frac{260\,u^{28}}{M_{28}}
    +\frac{301\,u^{29}}{M_{30}}
\]
is a polynomial of degree $29$ with rational coefficients.  All coefficients
of $H(2+x)$ are positive, so $H>0$ on $[2,\infty)$.  For each of the intervals
$[a,b]=[1,\frac54]$, $[\frac54,\frac{11}8]$, $[\frac{11}8,\frac32]$ and
$[\frac32,2]$, all coefficients of the polynomial $(1+x)^{29}\,H\bigl((a+bx)/(1+x)\bigr)$
are positive, so $H>0$ on $[a,b)$, since $x\mapsto(a+bx)/(1+x)$ maps
$[0,\infty)$ onto $[a,b)$.  These are finitely many sign checks in exact
rational arithmetic; an independent verifier is archived
at~\cite{WikstromZenodo}.  (Numerically, the minimum of $H$ on $[1,\infty)$ is
$H(1)\approx8.23$.)
\end{proof}

With the same method (linear programming for the multipliers, then an exact
proof of positivity by Descartes' rule of signs with bisection) we have found
certificates with $K\le150$ for $\kappa=43.7$, $44$, $45$, $60$, $70$, $100$,
$200$, $300$, $500$, $1000$, $3000$ and $10000$.  They are archived, with an
exact verifier, at~\cite{WikstromZenodo}.  The discretized linear programs
truncated at $k\le K$ change from feasible to infeasible near
$\kappa\approx43.6$, for every $K$ between $40$ and $600$ that we tried.  Feasibility of a
discretized, truncated problem proves nothing about the full problem, and we
have no proof that $43.6$ is the true threshold.  Nor do we know whether
non-existence holds for every $\kappa\ge43.7$.

\begin{question}
Is $\mathcal{K}_W=\sqrt{m_1/m_0}$ for the weights
$\abs z^{2\beta}e^{-\alpha\abs z^2}$ with $\beta>\kappa_1-1$, and for the
weights of Proposition~\ref{prop:thin}?
\end{question}

\subsection{Other exponents}
For the spaces $F^p_\alpha$ with $1\le p<\infty$, $p\ne2$, the Korenblum
radius remains unknown; see \cite[Question~4.2]{WeeLe2020}.  (For $0<p<1$ the
principle fails \cite{HuLou2019}.)  For even exponents $p=2n$ the
map $f\mapsto f^n$ reduces the problem to $F^2_{n\alpha}$.  In the
normalization of \cite{WeeLe2020}, where the weight is
$e^{-p\alpha\abs{z}^2/2}$, Corollary~\ref{cor:alpha} gives the lower bound
$(n\alpha)^{-1/2}$.  The pair $f\equiv c$, $g(z)=z$ gives the upper bound
$(n!)^{1/(2n)}(n\alpha)^{-1/2}$ \cite{WeeLe2020}.  Our method does not
apply directly when $p\neq2$, since the norm is then no longer a moment
functional of the squared Taylor coefficients.

\subsection{The Bergman space}
Theorem~\ref{thm:decreasing} applies to $W=\mathbf 1_{[0,1]}$, for which
$\norm{f}_W$ is the norm of $A^2(\D)$ normalized so that $\norm{1}=1$, and
gives $\mathcal{K}_W=\sqrt{m_1/m_0}=1/\sqrt2$.  This exceeds the upper bound
$c_2<0.6779$ of \cite{Wang2008}, but there is no contradiction.  For a weight
with compact support, admissibility for $F^2_W$ as defined in
Section~\ref{sec:prelim} concerns entire functions and requires
$\abs{f}\le\abs{g}$ on the whole exterior $\{\abs{z}>R\}$, including the part
outside the support of $W$, where the norm does not see the functions.
Korenblum's problem only asks for domination on the annulus
$\{c<\abs{z}<1\}$, which is a much weaker hypothesis.


\end{document}